\RequirePackage{fix-cm}
\documentclass[smallextended, envcountsame]{svjour3}
\smartqed
\journalname{}

\usepackage[unicode = false,
    pdftoolbar = true,
    colorlinks = true,
    linkcolor = blue,
    citecolor = blue,
    filecolor = black,
    urlcolor = blue,
    breaklinks = true]{hyperref}

\usepackage[utf8]{inputenc}
\usepackage{listings}  
\usepackage{enumitem}  
\usepackage{amsmath}   
\usepackage{amssymb}   
\usepackage{xcolor}    
\usepackage{graphicx}  
\usepackage[ruled]{algorithm2e}
\usepackage{verbatim}
\usepackage{geometry}
\usepackage[justification=centering]{caption}
\usepackage{mathtools}
\usepackage{float}
\usepackage{bm}
\usepackage{subcaption}
\usepackage{placeins}
\usepackage{titlesec}
\usepackage{multirow}
\usepackage{moresize}

\definecolor{gab}{HTML}{50c878}
\definecolor{pat}{HTML}{bf68ff}
\definecolor{sham}{HTML}{7faaff}

\allowdisplaybreaks[4]

\DeclareMathAlphabet{\mymathbb}{U}{BOONDOX-ds}{m}{n}

\makeatletter
\newcommand*{\rom}[1]{\expandafter\@slowromancap\romannumeral #1@}
\makeatother

\SetKwIF{If}{ElseIf}{Else}{if}{}{else if}{else}{end if}%
\allowdisplaybreaks[4]
\SetArgSty{textnormal}

\setcounter{secnumdepth}{4}

\titleformat{\paragraph}
{\normalfont\normalsize\itshape}{\theparagraph}{1em}{}
\titlespacing*{\paragraph}
{0pt}{3.25ex plus 1ex minus .2ex}{1.5ex plus .2ex}


\usepackage[utf8]{inputenc}
\usepackage{listings}  
\usepackage{enumitem}  
\usepackage{amsmath}   
\usepackage{amssymb}   
\usepackage{xcolor}    
\usepackage{graphicx}  
\usepackage[ruled]{algorithm2e}
\usepackage{verbatim}
\usepackage{geometry}
\usepackage[justification=centering]{caption}
\usepackage{mathtools}
\usepackage{float}
\usepackage{bm}
\usepackage{subcaption}
\usepackage{placeins}
\usepackage{titlesec}
\usepackage{multirow}
\usepackage{moresize}
\usepackage{etoolbox}
\usepackage{cases}

\allowdisplaybreaks[4]

\DeclareMathAlphabet{\mymathbb}{U}{BOONDOX-ds}{m}{n}

\DeclarePairedDelimiter\floor{\lfloor}{\rfloor}

\SetKwIF{If}{ElseIf}{Else}{if}{}{else if}{else}{end if}%
\allowdisplaybreaks[4]
\SetArgSty{textnormal}

\makeatletter
\renewenvironment{quotation}
   {\list{}{\small
            \listparindent 1.5em%
            \rightmargin   \leftmargin
            \parsep        \z@ \@plus\p@}%
    \item\relax}
   {\endlist}
\makeatother

\title{Determining inscribability of polytopes via rank minimization based on slack matrices}

\author{\hspace{-0.3cm}
\begin{tabular}{cccc}
    Yiwen Chen, & Jo\~ao Gouveia, & Warren Hare, & Amy Wiebe
\end{tabular}}
\institute{Yiwen Chen \at 
    Department of Mathematics, University of British Columbia, Kelowna, British Columbia, V1V 1V7, Canada. Chen's research is partially funded by the Natural Sciences and Engineering Research Council of Canada (les recherches de Chen sont partiellement financ\'ees par le Conseil de recherches en sciences naturelles et en g\'enie du Canada), Discover Grant \#2023-03555. (\email{yiwchen@student.ubc.ca})
\and Jo\~ao Gouveia \at
    CMUC, Department of Mathematics, University of Coimbra, 3001-454 Coimbra, Portugal.  Gouveia's research is partially supported by Centro de Matemática da Universidade de Coimbra (CMUC), funded by the Portuguese Government through FCT/MCTES, DOI 10.54499/UIDB/00324/2020. (\email{jgouveia@mat.uc.pt})
\and Warren Hare \at
    Department of Mathematics, University of British Columbia, Kelowna, British Columbia, V1V 1V7, Canada.  Hare's research is partially funded by the Natural Sciences and Engineering Research Council of Canada (les recherches de Hare sont partiellement financ\'ees par le Conseil de recherches en sciences naturelles et en g\'enie du Canada), Discover Grant \#2023-03555. (\email{warren.hare@ubc.ca})
\and Amy Wiebe \at
    Department of Mathematics, University of British Columbia, Kelowna, British Columbia, V1V 1V7, Canada.  Wiebe's research is partially funded by the Natural Sciences and Engineering Research Council of Canada (les recherches de Wiebe sont partiellement financ\'ees par le Conseil de recherches en sciences naturelles et en g\'enie du Canada), Discovery  Grant \#2024-04643. (\email{amy.wiebe@ubc.ca})
}
\date{\today}

\usepackage{amsopn}

\begin{document}

\maketitle

\begin{abstract}
A polytope is inscribable if there is a realization where all vertices lie on the sphere.  In this paper, we provide a necessary and sufficient condition for a polytope to be inscribable.  Based on this condition, we characterize the problem of determining inscribability as a minimum rank optimization problem using slack matrices.  We propose an SDP approximation for the minimum rank optimization problem and prove that it is tight for certain classes of polytopes.  Given a polytope, we provide three algorithms to determine its inscribability.  All the optimization problems and algorithms we propose in this paper depend on the number of vertices and facets but are independent of the dimension of the polytope.  Numerical results demonstrate our SDP approximation's efficiency, accuracy, and robustness for determining inscribability of simplicial polytopes of dimensions $4\le d\le 8$ with vertices $n\le 10$, revealing its potential in high dimensions.
\end{abstract}

\section{Introduction}
    We consider whether a given polytope is {\em inscribable}, that is, whether a combinatorially equivalent polytope exists for the given polytope such that all vertices are on the sphere.  This problem dates back to 1832 when Steiner \cite{steiner1832systematische} asked whether all 3-dimensional polytopes are inscribable.  In 1928, Steinitz \cite{steinitz1928isoperimetrische} provided a negative answer to Steiner’s question.  A characterization of 3-dimensional inscribable polytopes was found by Rivin~\cite{rivin1996characterization} in 1996.
    
    Compared to the inscribability of 3-dimensional polytopes, much less is known about higher-dimensional polytopes.  In particular, an open question raised by Padrol and Ziegler \cite{padrol2016six} in 2016 reads as follows:
    \begin{quotation}
        Question 2.2: Find strong necessary and sufficient conditions for the inscribability of higher-dimensional polytopes.
    \end{quotation}

    Using nonlinear optimization techniques, Firsching \cite{firsching2017realizability} provided a complete enumeration of simplicial 4-polytopes with up to 10 vertices and determined the inscribability of almost all of them.  However, their method is likely to be inefficient for determining the inscribability of higher dimensional polytopes, since it requires solving a system of polynomial equations that increases in size, degree, and number of variables with dimension.
    
    In this paper, we provide a new characterization of the inscribability of a polytope.  Our characterization is based on {\em slack matrices} of polytopes.  Given vertex and facet representations of a polytope, the $(i,j)$-th element of the slack matrix is the value of vertex $i$ evaluated at facet inequality $j$ (see Section \ref{sec:characterize} for more details).  Slack matrices have demonstrated effective use in the theory of extended formulations of polytopes \cite{braun2015approximation,fiorini2012linear,rothvoss2017matching,yannakakis1988expressing} and in characterizing the realization space of polytopes \cite{gouveia2019slack,gouveia2023combining}.  Using slack matrices, we provide a necessary and sufficient condition for the inscribability of a polytope.  We show that our condition can be easily checked by solving a particular rank minimization problem.
    
    In order to solve the minimum rank optimization problem, we introduce and analyze a semi-definite programming (SDP) approximation for it.  In particular, we prove that the SDP approximation is tight for many classes of polytopes.  We design experiments to demonstrate the efficiency, accuracy, and robustness of the SDP approximation.  We then provide less expensive heuristics to tune the SDP parameters and compare their results.
    
    It is worth mentioning that the minimum rank optimization problem and the SDP approximation we propose are independent of the dimension of the polytope, which is an advantage over the method given by Firsching \cite{firsching2017realizability}.  We run our experiments on simplicial polytopes of dimensions $4\le d\le 8$ with vertices $n\le 10$.  For each test case, the SDP approximation with 10 iterations of tuning parameters finishes in 10 seconds.  This suggests our method has the potential to be efficient in high dimensions.
    
    The remainder of this paper is organized as follows.  We end this section with remarks on our notation.  In Section \ref{sec:characterize}, we first review the definition of slack matrices.  Then, we present the main theorem of this paper, which provides a necessary and sufficient condition for the inscribability of a polytope.  We also explain how this inscribability problem can be formulated as an equivalent minimum rank optimization problem.  In Section \ref{sec:sdp}, we propose an SDP approximation for the minimum rank optimization problem introduced in Section \ref{sec:characterize}.  We prove that the SDP approximation is tight for certain classes of polytopes.  In Section \ref{sec:algs}, we present three algorithms for approximating the minimum rank problem.  In Section \ref{sec:numexp}, we design numerical experiments to compare the performance of the three algorithms.  Section \ref{sec:conclusion} concludes our work covered by this paper and suggests some potential future directions.

\subsection{Notation}
    For a matrix $A\in\mathbb{R}^{m\times n}$, we use $A_{ij}$ to denote the $(i,j)$-th element of $A$.  We use $A=[a_1\cdots a_n]$ to denote the column representation of $A$.  For real symmetric matrices $A$, we use $A\succeq 0$ to mean that $A$ is positive semi-definite.  We use $I_n$ to denote the identity matrix of order $n$ and $\mymathbb{1}_{m\times n}$ to denote the all-one matrix in $\mathbb{R}^{m\times n}$.  We use $\mymathbb{0}_n$ and $\mymathbb{1}_n$ to denote the all-zero and all-one vectors in $\mathbb{R}^{n}$, respectively.  For $i\in\{1,...,n\}$, we use $e_i^{(n)}\in\mathbb{R}^n$ to denote the $i$-th coordinate vector in $\mathbb{R}^n$. For a set $T$, we use $|T|$ to denote the cardinality of $T$.

\section{Characterizing inscribability using slack matrices}\label{sec:characterize}
    In this section, we first review the definition of a slack matrix and some of its basic properties.  Then, we give the main theorem of this paper, which characterizes the inscribability of a polytope using slack matrices.  Based on the theorem, we show how the inscribability problem can be formulated as an equivalent minimum rank optimization problem.
    
    For a $d$-polytope $P$, suppose that the $\mathcal{V}$-representation of $P$ is $P=\mathrm{conv}\{v_1,\ldots,v_n\}$ and one $\mathcal{H}$-representation of $P$ is $P=\{x\in\mathbb{R}^d:H^\top x\le c\}$ where $H\in\mathbb{R}^{d\times m}$ and $c\in\mathbb{R}^m$.  Let $V=[v_1\cdots v_n]\in\mathbb{R}^{d\times n}$.  Then a slack matrix $S\in\mathbb{R}^{n\times m}$ of $P$ is defined by
    \begin{equation*}
        S = \begin{bmatrix}
            \mymathbb{1}_n & V^\top
        \end{bmatrix}\begin{bmatrix}
            c^\top\\
            -H
        \end{bmatrix}.
    \end{equation*}

    Denote $H=[h_1\cdots h_m]$ and $c=[c_1\cdots c_d]^\top$, where each column of $H$ is a facet normal.  Notice that all elements of $S$ are nonnegative since $S_{ij}=c_j-h_j^\top v_i\ge 0$ for all $i=1,\ldots,n$ and $j=1,\ldots,m$.  As each facet inequality is invariant under positive scaling on both sides, any positive scaling of the columns of $S$ gives a slack matrix of $P$.

    We note that the combinatorial type of a polytope as well as the support of its slack matrix are invariant under projective transformations.
    \begin{lemma}\label{lem:projpreservecombtype}
        Projective transformations preserve the combinatorial type of a polytope and the support of its slack matrix.
    \end{lemma}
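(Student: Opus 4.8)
The plan is to pass to homogeneous coordinates, where a projective transformation becomes an invertible linear map, and to show that this map rescales the slack matrix by \emph{positive} row and column factors, hence preserves its zero pattern. The observation that makes this work is that the slack entry $S_{ij}=c_j-h_j^\top v_i$ is exactly the inner product of the homogenized vertex $\hat v_i=(1,v_i^\top)^\top$ with the homogenized facet $\hat f_j=(c_j,-h_j^\top)^\top$; that is, $S_{ij}=\hat v_i^\top \hat f_j$, where the rows of $S$ record the vertices and the columns record the facets of $P$. Since the zero pattern of $S$ records precisely the vertex--facet incidences of $P$, and these incidences determine the face lattice, it suffices to track how the support transforms, and the combinatorial statement will come along for free.

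First I would represent an admissible projective transformation of $P$ by an invertible matrix $M\in\mathbb R^{(d+1)\times(d+1)}$ acting on homogeneous coordinates, normalizing its sign so that the first coordinate $\mu_i$ of $M\hat v_i$ is positive for every vertex. This is possible because an admissible transformation keeps $P$ strictly on one side of the hyperplane that maps to infinity, so the relevant linear functional has constant sign on $P$ and can be flipped by replacing $M$ with $-M$ (the same projective map). Passing to the homogenization cone $C=\operatorname{cone}\{\hat v_1,\dots,\hat v_n\}$, whose facets are cut out by the $\hat f_j$, the image polytope $P'$ homogenizes to the cone $MC$. Because $M$ is a linear isomorphism it carries the face lattice of $C$ onto that of $MC$: the extreme rays $\hat v_i$ go to the extreme rays $M\hat v_i$ (the homogenized vertices $\hat v_i'=\tfrac1{\mu_i}M\hat v_i$ of $P'$), and the facet normals $\hat f_j$ transform contravariantly to $\hat f_j'=\nu_j(M^{-1})^\top\hat f_j$ for some positive scalars $\nu_j$.

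With these identifications the new slack matrix computes to
\[
S'_{ij}=\hat v_i'^\top \hat f_j'=\frac{\nu_j}{\mu_i}\,\hat v_i^\top M^\top (M^{-1})^\top \hat f_j=\frac{\nu_j}{\mu_i}\,\hat v_i^\top \hat f_j=\frac{\nu_j}{\mu_i}\,S_{ij},
\]
since $M^\top(M^{-1})^\top=I$. As every factor $\nu_j/\mu_i$ is strictly positive, $S'_{ij}=0$ exactly when $S_{ij}=0$, so $S$ and $S'$ have the same support; the freedom in the $\nu_j$ is harmless because positively rescaling columns of a slack matrix again yields a slack matrix of the same polytope. Preservation of the support then yields preservation of the vertex--facet incidence pattern, and hence of the combinatorial type.

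I expect the main obstacle to be the bookkeeping in the homogenization step rather than the final algebra: one must argue that an admissible projective map genuinely sends $P$ to a polytope $P'$ whose vertices and facets are the images of those of $P$, i.e.\ that $M$ respects the cone structure after the sign normalization and that facet normals transform by $(M^{-1})^\top$ up to positive scaling. Once this correspondence is established, the scaling identity above is routine, and the two assertions of the lemma follow simultaneously, since each is governed by the zero pattern of the slack matrix.
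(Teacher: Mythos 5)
Your proposal is correct, and it is worth noting that the paper does not actually argue this lemma at all: its ``proof'' is a pointer to \cite{gouveia2019slack} and \cite[Corollary 1.5]{gouveia2017four}, where the lemma is a consequence of the standard fact that admissible projective transformations change a slack matrix only by positive row and column scalings. Your homogenization argument is precisely that standard fact worked out from scratch, so you have supplied a self-contained proof where the paper defers to the literature. The key steps all check out: the slack entry is indeed the pairing $\hat v_i^\top \hat f_j$ of homogenized vertex and facet data; an admissible projective map acts on the homogenization cone $C$ as an invertible linear map $M$, and your sign normalization (using that the linear functional sent to infinity has constant sign on $P$) is exactly where admissibility is used --- it also shows $MC\cap\{y_0=0\}=\{0\}$, so the image slice is again a polytope whose vertices and facets are the images of those of $P$, since a linear isomorphism carries extreme rays to extreme rays and facets to facets of the cone. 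Your positivity claim for the $\nu_j$ is justified because facet-defining inequalities of a pointed cone are unique up to positive multiples once the inequality direction is fixed, and $(M^{-1})^\top\hat f_j$ is valid on $MC$ with the correct direction. The resulting identity $S'_{ij}=(\nu_j/\mu_i)S_{ij}$ then gives preservation of the support, and the combinatorial statement follows either from your observation that vertex--facet incidences determine the face lattice or, even more directly, from the fact that the linear isomorphism $M$ preserves the face lattice of the homogenization cone, which is that of the polytope. If anything, your argument proves slightly more than the lemma states, namely the full row/column-scaling characterization of projective equivalence that underlies the cited references, which is also the fact the paper implicitly relies on when it remarks that column rescalings of $S$ are again slack matrices of $P$.
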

    \begin{proof}
        See \cite{gouveia2019slack} and \cite[Corollary 1.5]{gouveia2017four}.
        
    $\hfill\qed$
    \end{proof}
    Based on Lemma \ref{lem:projpreservecombtype}, the next lemma implies that, if a polytope is inscribable, then there exists an inscription such that the origin is in the interior.
    \begin{lemma}(Inspired by \cite[p. 285]{grunbaum2003convex})\label{lem:0ininterior}
        If $P$ is inscribed in a sphere, then we can apply a projective transformation that carries the sphere onto itself and maps an interior point of $P$ to the origin.  
    \end{lemma}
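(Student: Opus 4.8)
The plan is to construct the desired projective transformation explicitly using the geometry of the sphere. Suppose $P$ is inscribed in a sphere $\mathbb{S}$ and let $p$ be any interior point of $P$; since all vertices lie on $\mathbb{S}$ and $p$ is interior, $p$ lies strictly inside the ball bounded by $\mathbb{S}$. I would first reduce to a normalized situation by applying a rigid motion (a special case of a projective transformation) so that $\mathbb{S}$ is the unit sphere centered at the origin and $p$ lies on a coordinate axis, say $p=(0,\ldots,0,t)$ with $0\le t<1$. The goal is then to find a projective map fixing the unit sphere setwise and sending $p$ to the origin.

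The key tool is the family of projective transformations that preserve a sphere, namely those induced by the hyperbolic isometries of the ball (the Möbius-type transformations of $\mathbb{S}^{d-1}$ extended projectively to $\mathbb{R}^d$). Concretely, I would exhibit a one-parameter family $\Phi_s$ of such maps that slide points along the chosen axis: each $\Phi_s$ carries the unit sphere onto itself and acts on the axis as a hyperbolic translation, so that the image of a point at height $t$ can be driven continuously to $0$. The cleanest way to present this is to write $\Phi_s$ in homogeneous coordinates as a linear map on $\mathbb{R}^{d+1}$ that preserves the quadratic form defining the sphere (a Lorentz-type boost in the plane spanned by the axis and the homogenizing coordinate), and then verify by direct computation that (i) it preserves $\mathbb{S}$ and (ii) choosing the boost parameter appropriately sends $p$ to the origin. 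Because $p$ is strictly interior, the required parameter is finite and the map is a genuine projective transformation with no pole on $P$.

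I would then undo the initial normalizing rigid motion by composing with its inverse, so that the final transformation carries the original sphere onto itself and maps the original interior point to the origin. The composition of projective transformations is projective, so this yields the claimed map. Throughout, I would invoke Lemma \ref{lem:projpreservecombtype} to note that the combinatorial type of $P$ is unchanged, so the transformed object is again a polytope inscribed in the same sphere with the origin in its interior.

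The main obstacle I anticipate is verifying that the sphere-preserving boost genuinely maps $P$ into a bounded polytope with the origin interior, rather than introducing a point at infinity: one must check that the hyperplane at infinity of the projective map does not meet $P$. Since $P$ lies inside the closed ball and the boost is a hyperbolic isometry of the open ball, its pole lies outside the closed ball and hence outside $P$, so this is safe; but making that argument precise, and confirming that the origin lands in the interior (not merely the closure) of the image, is where the care is needed. The explicit coordinate computation of the boost acting on the axis point is routine once the setup is fixed.
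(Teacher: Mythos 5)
Your proposal is correct and takes essentially the same route as the paper: the paper normalizes exactly as you do (unit sphere, interior point $\alpha e_1^{(d)}$ on an axis with $|\alpha|<1$) and then writes down the explicit map $g(x)=\bigl[\tfrac{x_1-\alpha}{1-\alpha x_1},\tfrac{\sqrt{1-\alpha^2}\,x_2}{1-\alpha x_1},\ldots,\tfrac{\sqrt{1-\alpha^2}\,x_d}{1-\alpha x_1}\bigr]^\top$, which is precisely the hyperbolic translation (Lorentz boost in homogeneous coordinates) you describe, with the boost parameter chosen so the axis point goes to $\mymathbb{0}_d$. Your extra care about the pole hyperplane ($x_1=1/\alpha$, which misses the closed ball) and the origin landing in the interior is sound verification that the paper leaves implicit.
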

    \begin{proof}
        Without loss of generality, we suppose that $P$ is inscribed in the unit sphere $\mathcal{S}^{d-1}$ and $\alpha e_1^{(d)}$ is in the interior of $P$, where $|\alpha|<1$.  Inspired by \cite[p. 285]{grunbaum2003convex}, we consider the projective transformation $g:\mathbb{R}^d\to\mathbb{R}^d$ defined by
        \begin{equation*}
            g(x) = g(x_1,\ldots,x_d) = \left[\frac{x_1-\alpha}{1-\alpha x_1},\frac{\sqrt{1-\alpha^2}x_2}{1-\alpha x_1},\ldots,\frac{\sqrt{1-\alpha^2}x_d}{1-\alpha x_1}\right]^\top.
        \end{equation*}
        Then, $g(x)\in\mathcal{S}^{d-1}$ for all $x\in\mathcal{S}^{d-1}$ and $g(\alpha e_1^{(d)})=\mymathbb{0}_d$.

    $\hfill\qed$
    \end{proof}
    Therefore, without loss of generality, we can assume that $\mymathbb{0}_d$ is in the interior of $P$ and $c=\mymathbb{1}_m$, so the slack matrix of $P$ is $S=\mymathbb{1}_{n\times m}-V^\top H$.  Note that with this choice, the columns of $H$ are the vertices of the polytope that is polar to the polytope whose vertices are the columns of $V$.

    The following theorem characterizes the inscribability of a polytope by using slack matrices.
    \begin{theorem}\label{thm:inscri}
        Given a $d$-polytope $P$, it is inscribable if and only if we can construct a matrix
        \begin{equation*}
            X = \begin{bmatrix}
                1 & \mymathbb{1}_n^\top & \mymathbb{1}_m^\top\\
                \mymathbb{1}_n & A & S\\
                \mymathbb{1}_m & S^\top & B
            \end{bmatrix} \succeq 0
        \end{equation*}
        of rank $d+1$, with all diagonal elements of $A$ being the same constant, all elements of $S$ being nonnegative, and $S$ having the same support as a slack matrix of $P$.
    \end{theorem}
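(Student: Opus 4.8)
The plan is to exploit the fact that a positive semi-definite matrix $X$ of rank $d+1$ factors as $X=G^\top G$ for some $G\in\mathbb{R}^{(d+1)\times(1+n+m)}$ of rank $d+1$, and to read off the prescribed block structure of $X$ from the columns of $G$. Writing $G=\begin{bmatrix} w & U & W\end{bmatrix}$ with $w\in\mathbb{R}^{d+1}$, $U=[u_1\cdots u_n]$, and $W=[\tilde w_1\cdots \tilde w_m]$, the required form of $X$ translates into $\|w\|^2=1$, $w^\top u_i=w^\top\tilde w_j=1$, all $\|u_i\|^2$ equal to a common constant $\rho$ (the repeated diagonal of $A$), and $u_i^\top\tilde w_j=S_{ij}$. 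Thus the $u_i$ lie on the intersection of the hyperplane $\{x:w^\top x=1\}$ with the sphere $\{x:\|x\|^2=\rho\}$, which is exactly the configuration of vertices of a polytope inscribed in a sphere, seen one dimension up. This dictionary drives both directions.

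For the forward implication I would start from an inscribed realization. By Lemma~\ref{lem:0ininterior} we may assume $P$ is inscribed in the unit sphere with $\mymathbb{0}_d$ in its interior, so $\|v_i\|=1$ for every vertex and, after the normalization $c=\mymathbb{1}_m$, the slack matrix is $S=\mymathbb{1}_{n\times m}-V^\top H$. I would then set
\begin{equation*}
    w=\begin{bmatrix}1\\ \mymathbb{0}_d\end{bmatrix},\qquad u_i=\begin{bmatrix}1\\ v_i\end{bmatrix},\qquad \tilde w_j=\begin{bmatrix}1\\ -h_j\end{bmatrix},
\end{equation*}
and let $X=G^\top G$ with $G=\begin{bmatrix} w & U & W\end{bmatrix}$. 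A direct check gives $\|w\|^2=1$, $w^\top u_i=w^\top\tilde w_j=1$, the $(2,3)$ block $u_i^\top\tilde w_j=1-v_i^\top h_j=S_{ij}$, and the common diagonal $\|u_i\|^2=1+\|v_i\|^2=2$, so $X$ has exactly the required shape and is positive semi-definite by construction. Since $P$ is $d$-dimensional, the $v_i$ affinely span $\mathbb{R}^d$, hence the $u_i$ linearly span $\mathbb{R}^{d+1}$ and $\mathrm{rank}(X)=d+1$; nonnegativity and the support of $S$ are inherited from the genuine (combinatorially unchanged) slack matrix. This settles one direction by explicit construction.

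For the converse, given $X$ as in the statement I would factor $X=G^\top G$ and recover $w,u_i,\tilde w_j$. First I would argue $\rho>1$: Cauchy--Schwarz applied to $1=w^\top u_i\le\|w\|\,\|u_i\|$ gives $\rho\ge1$, while $\rho=1$ would force every $u_i=w$ and hence $S=\mymathbb{1}_{n\times m}$, contradicting the fact that the support of a slack matrix always contains zeros (every facet omits at least one vertex). Choosing an orthonormal basis $\{w,f_1,\ldots,f_d\}$ of $\mathbb{R}^{d+1}$ and setting $\hat v_i=(f_1^\top u_i,\ldots,f_d^\top u_i)$ and $\hat h_j=(f_1^\top\tilde w_j,\ldots,f_d^\top\tilde w_j)$ identifies the hyperplane $\{w^\top x=1\}$ with $\mathbb{R}^d$; then $\|\hat v_i\|^2=\rho-1>0$, so the $\hat v_i$ lie on a common sphere, and $S_{ij}=u_i^\top\tilde w_j=1+\hat v_i^\top\hat h_j=1-\hat v_i^\top(-\hat h_j)$. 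Hence $S$ is the slack matrix of the configuration of points $\hat v_i$ together with the halfspaces $(-\hat h_j)^\top x\le1$, and it is nonnegative with the support of $S_P$.

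The remaining, and hardest, step is to promote this configuration to an honest inscribed polytope combinatorially equivalent to $P$. Here I would invoke the slack realization theory: a nonnegative matrix with the same support as $S_P$ and rank $d+1$ is, up to positive column scaling, the slack matrix of a polytope combinatorially equivalent to $P$. Applying this to $S$ shows $\hat P=\mathrm{conv}\{\hat v_1,\ldots,\hat v_n\}$ is combinatorially equivalent to $P$ with facet hyperplanes $(-\hat h_j)^\top x=1$, and since all its vertices lie on a sphere, $P$ is inscribable. The technical crux is verifying the rank hypothesis $\mathrm{rank}(S)=d+1$ needed to apply this result: one has $\mathrm{rank}(S)\le d+1$ because $S$ is a submatrix of the rank-$(d+1)$ matrix $X$, whereas the matching lower bound must be extracted from $\mathrm{rank}(G)=d+1$ together with the incidence structure forced by the support of $S_P$. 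Making this rank bookkeeping precise, and confirming that the recovered inequalities are exactly the facets of $\hat P$ so that no spurious or missing faces appear, is where essentially all the work of the converse lives.
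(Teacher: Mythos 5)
Your forward direction is correct and coincides with the paper's (your $X=G^\top G$ with $w=e_1^{(d+1)}$, $u_i=(1,v_i)$, $\tilde w_j=(1,-h_j)$ is exactly the paper's $WW^\top$), but the converse has a genuine gap where you invoke ``slack realization theory.'' The statement you use --- that a nonnegative matrix with the support of $S_P$ and rank $d+1$ is, up to positive column scaling, a slack matrix of a polytope combinatorially equivalent to $P$ --- is false as stated. The actual theorem (\cite[Theorem 2.2]{gouveia2019slack}, which is what the paper applies) additionally requires $\mymathbb{1}_n$ to lie in the column span of $S$, and this hypothesis cannot be recovered by column scaling, since positive column scaling leaves the column span unchanged. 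To see that the extra hypothesis is not redundant, scale the rows of a genuine slack matrix by a positive diagonal matrix $D$: the result $DS_P$ is nonnegative, has the same support and rank $d+1$, but its column span is $D\cdot\mathrm{colspan}(S_P)$, which for generic $D$ (and $n>d+1$) does not contain $\mymathbb{1}_n$; such a matrix is a slack matrix of a cone, not of any polytope, and no column scaling fixes this. The paper extracts precisely this missing condition from the bordered structure of $X$: since $[\mymathbb{1}_n \; S]$ is a submatrix of $X$ and $\mathrm{rank}(S)=\mathrm{rank}(X)=d+1$, one gets $\mathrm{rank}([\mymathbb{1}_n\;S])=\mathrm{rank}(S)$, i.e., $\mymathbb{1}_n\in\mathrm{colspan}(S)$. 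Your sketch uses the border of $X$ only through the normalizations $w^\top u_i=w^\top\tilde w_j=1$ and so discards exactly the information needed here.

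Two further steps that you correctly flag as ``the crux'' are left undone, and in one case you point at the wrong source. First, the lower bound $\mathrm{rank}(S)\ge d+1$ does not come from $\mathrm{rank}(G)=d+1$ plus incidence bookkeeping --- the factorization only yields the upper bound $\mathrm{rank}(S)\le\mathrm{rank}(X)=d+1$. The lower bound is an intrinsic property of the support alone: any nonnegative matrix supported like a slack matrix of a $d$-polytope has rank at least $d+1$ (\cite[Lemma 3.1]{gouveia2019slack}), which is also why the minimum in problem \eqref{pro:ori} is never below $d+1$. Second, even once $S$ is known to be a slack matrix of \emph{some} realization of $P$, it does not immediately follow that your specific points $\hat v_i$ are the vertices of that realization with the $(-\hat h_j)$ cutting out exactly its facets; the paper closes this by passing through the cone generated by the vectors $(1,\hat v_i)$ and applying \cite[Lemma 5, Theorem 6, and Theorem 14]{gouveia2013nonnegative} to identify $\{1\}\times\mathrm{conv}\{\hat v_1,\ldots,\hat v_n\}$ with $\mathrm{conv}(\mathrm{rows}(S))$ and hence with $P$. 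Your geometric dictionary (projecting onto $w^\perp$, the common sphere of radius $\sqrt{\rho-1}$, and the positivity $\rho>1$) is a nice reformulation of the paper's final observation that constant diagonal of $A$ forces inscription, but without the column-span argument, the support-rank lemma, and the factorization-uniqueness results, the converse is incomplete.
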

    \begin{proof}
        We first assume that the $d$-polytope $P$ is inscribable.  Moreover, we suppose that $P$ inscribed in a sphere with radius $a$, with vertices $v_1,\ldots,v_n\in\mathbb{R}^d$ and facets cut out by inequalities $1-h_1^\top x\ge 0,\ldots,1-h_m^\top x\ge 0$ with $h_1,\ldots,h_m\in\mathbb{R}^d$.  Denote matrices $V=[v_1\cdots v_n]\in\mathbb{R}^{d\times n}$ and $H=[h_1\cdots h_m]\in\mathbb{R}^{d\times m}$.  Then, we have $v_i^\top v_i=a^2,i=1,\ldots,n$.  Since $P$ is a $d$-polytope, we have $\mathrm{rank}([\mymathbb{1}_n~V^\top])=\mathrm{rank}([\mymathbb{1}_m~-H^\top])=d+1$ and thus the slack matrix of $P$, denoted by $S$, has rank $d+1$ \cite[Theorem 14]{gouveia2013nonnegative}.  
        
        Denote the matrix
        \begin{equation*}
            W = \begin{bmatrix}
                1 & \mymathbb{0}_d^\top\\
                \mymathbb{1}_n & V^\top\\
                \mymathbb{1}_m & -H^\top
            \end{bmatrix}.
        \end{equation*}
        Notice that $W$ also has rank $d+1$.  Then,
        \begin{equation*}
            WW^\top = \begin{bmatrix}
                1 & \mymathbb{1}_n^\top & \mymathbb{1}_m^\top\\
                \mymathbb{1}_n & \mymathbb{1}_{n\times n}+V^\top V & S\\
                \mymathbb{1}_m & S^\top & \mymathbb{1}_{m\times m}+H^\top H
            \end{bmatrix}\succeq 0
        \end{equation*}
        has rank $d+1$ and all diagonal elements in the block $\mymathbb{1}_{n\times n}+V^\top V$ equal $a^2+1$, and thus is the desired matrix $X$.

        For the other direction, suppose that there exists a matrix
        \begin{equation*}
            X = \begin{bmatrix}
                1 & \mymathbb{1}_n^\top & \mymathbb{1}_m^\top\\
                \mymathbb{1}_n & A & S\\
                \mymathbb{1}_m & S^\top & B
            \end{bmatrix} \succeq 0
        \end{equation*}
        of rank $d+1$, with all diagonal elements of $A$ being the same constant, all elements of $S$ being nonnegative, and $S$ having the same support as a slack matrix of $P$.  Since $X\succeq 0$ and has rank $d+1$, there exist a matrix $M\in\mathbb{R}^{(n+m+1)\times (d+1)}$ with rank $d+1$ such that $X=MM^\top$.  Denote 
        \begin{equation*}
            M = \begin{bmatrix}
                r_1^\top\\
                \vdots\\
                r_{n+m+1}^\top
            \end{bmatrix}.
        \end{equation*}
        Then we have $r_1^\top r_1=1$.  Thus, there exists an orthogonal matrix $Q\in\mathbb{R}^{(d+1)\times (d+1)}$ such that $Qr_1=e_1^{(d+1)}$.  Since $MQ^\top QM^\top=MM^\top=X$, without loss of generality, we suppose that $r_1=e_1^{(d+1)}$.  Then from the first row of $X$ we get $r_1^\top r_i=1,i=1,\ldots,n+m+1$, which implies that the first element of all $r_i,i=1,\ldots,n+m+1$ must equal 1.  That is, matrix $M$ has the form
        \begin{equation*}
            M = \begin{bmatrix}
                1 & \mymathbb{0}_d^\top\\
                \mymathbb{1}_n & M_1^\top\\
                \mymathbb{1}_m & -M_2^\top
            \end{bmatrix},
        \end{equation*}
        for some $M_1\in\mathbb{R}^{d\times n}$ and $M_2\in\mathbb{R}^{d\times m}$.  
        
        Denote $M_1=[v_1\cdots v_n]$ and $M_2=[h_1\cdots h_m]$.  We claim that polytope $P_M=\mathrm{conv}\{v_1,\ldots,v_n\}$ is a $d$-polytope inscribed in a sphere, with vertices $v_1,\ldots,v_n$ and facets cut out by inequalities $1-h_1^\top x\ge 0,\ldots,1-h_m^\top x\ge 0$, and is combinatorially equivalent to $P$, which implies that $P$ is inscribable.  
        
        Indeed, by \cite[Lemma 3.1]{gouveia2019slack}, $S$ has rank no less than $d+1$.  Since $\mathrm{rank}(S)\le\mathrm{rank}(X)=d+1$, we get $\mathrm{rank}(S)=d+1$.  From the fact that $\mathrm{rank}(X)=\mathrm{rank}(S)=d+1$ and the structure of $X$, we have $\mymathbb{1}_n$ in the column span of $S$.  Therefore, from \cite[Theorem 2.2]{gouveia2019slack}, we obtain that $S$ is a slack matrix of some realization of $P$. 

        Using \cite[Lemma 5 and Theorem 6]{gouveia2013nonnegative}, we have $S$ is a slack matrix of the cone generated by $\begin{bmatrix}
            1\\
            v_1
        \end{bmatrix},\ldots,\begin{bmatrix}
            1\\
            v_n
        \end{bmatrix}$ with an $\mathcal{H}$-representation formed by $\begin{bmatrix}
            1\\
            h_1
        \end{bmatrix},\ldots,\begin{bmatrix}
            1\\
            h_m
        \end{bmatrix}$.  Thus, from \cite[Theorem~14]{gouveia2013nonnegative} and the paragraph before it, we have $\{1\}\times P_M$ is isomorphic to $\mathrm{conv}(\mathrm{rows}(S))$ and so isomorphic to $P$, which implies that $P_M$ is isomorphic to $P$.  The proof is complete by noticing that all diagonal elements of $A= \mymathbb{1}_{n\times n}+M_1^\top M_1$ are constant which implies that $P_M$ is inscribed in a sphere.

    $\hfill\qed$
    \end{proof}

    We note that the proof of Theorem \ref{thm:inscri} implies a way to extract vertices $v_1,\ldots,v_n$ from $X$.  We will use this technique in the numerical experiments presented in Section \ref{sec:numexp}.

    Based on Theorem \ref{thm:inscri}, we can formulate the inscribability problem of a polytope as an equivalent minimum rank optimization problem.  Let $I^z=\{(i,j):1-h_j^\top v_i=0,i=1,\ldots,n,j=1,\ldots,m\}$ be the index set of zeros in the slack matrix.  Consider the following optimization problem
    \begin{align}\label{pro:ori}
        \begin{split}
            \min_X~~~&\mathrm{rank}(X)\\
            s.t.~~~& X = 
            \begin{bmatrix}
                1 & \mymathbb{1}_n^\top & \mymathbb{1}_m^\top\\
                \mymathbb{1}_n & A & S\\
                \mymathbb{1}_m & S^\top & B
            \end{bmatrix} \succeq 0\\
            & S_{ij} = 0,~\text{if}~(i,j)\in I^z\\
            & S_{ij} > 0,~\text{if}~(i,j)\notin I^z\\
            & A_{ii} = 2, i = 1,\ldots,n,
        \end{split}
    \end{align}
    where constraints $A_{ii} = 2, i = 1,\ldots,n$ come from the fact that we can assume all inscribable polytopes are inscribed in the unit sphere, without loss of generality.  Notice that the matrix $S$ in problem \eqref{pro:ori} has the same support as a slack matrix of a $d$-polytope and so the rank of $S$ is no less than $d+1$ \cite[Lemma 3.1]{gouveia2019slack}.  Thus, the minimum of problem \eqref{pro:ori} cannot be less than $d+1$ and $P$ is inscribable if and only if the minimizer of problem \eqref{pro:ori} has rank $d+1$.

    To the best of our knowledge, there is not an efficient direct method to solve problem~\eqref{pro:ori}.  Therefore, in the following sections, we provide a few methods to approximately solve problem \eqref{pro:ori}. Section \ref{sec:sdp} proposes a semi-definite programming (SDP) approximation, and Section~\ref{sec:algs} presents some algorithms to solve problem \eqref{pro:ori} numerically.

\section{SDP approximation}\label{sec:sdp}
    In this section, we provide an SDP approximation to approximate the minimum rank optimization problem \eqref{pro:ori} and prove that for certain classes of polytopes, the solution to the SDP approximation gives a solution to problem \eqref{pro:ori}.

\subsection{Problem formulation}
    Recall that the trace function is the convex envelope of the rank function over the set of matrices with norm bounded by one \cite{fazel2002matrix}.  Therefore, problem \eqref{pro:ori} can be approximated by
    \begin{align*}
    \begin{split}
        \min_X~~~&\mathrm{tr}(X)\\
        s.t.~~~& X = 
        \begin{bmatrix}
            1 & \mymathbb{1}_n^\top & \mymathbb{1}_m^\top\\
            \mymathbb{1}_n & A & S\\
            \mymathbb{1}_m & S^\top & B
        \end{bmatrix} \succeq 0\\
        & S_{ij} = 0,~\text{if}~(i,j)\in I^z\\
        & S_{ij} > 0,~\text{if}~(i,j)\notin I^z\\
        & A_{ii} = 2, i = 1,\ldots,n,
    \end{split}
    \end{align*}
    The constraints $S_{ij} > 0,(i,j)\notin I^z$ do not create a closed constraint set, making them difficult to work with.  These constraints can be replaced with penalties $\lambda_{ij}S_{ij},(i,j)\notin I^z$ with weights $\lambda_{ij}\ge 0$, resulting in the following SDP problem
    \begin{align}\label{pro:sdp_ori}
    \begin{split}
        \min_X~~~&\mathrm{tr}(X) - \sum_{(i,j)\notin I^z}\lambda_{ij}S_{ij}\\
        s.t.~~~& X = 
        \begin{bmatrix}
            1 & \mymathbb{1}_n^\top & \mymathbb{1}_m^\top\\
            \mymathbb{1}_n & A & S\\
            \mymathbb{1}_m & S^\top & B
        \end{bmatrix} \succeq 0\\
        & S_{ij} = 0,~\text{if}~(i,j)\in I^z\\
        & A_{ii} = 2, i = 1,\ldots,n.
    \end{split}
    \end{align}
    Note that problem \eqref{pro:sdp_ori} is an approximation of problem \eqref{pro:ori}. 
    
    We want to explore when the solution to problem \eqref{pro:sdp_ori} gives a solution to problem \eqref{pro:ori}.  Our strategy is as follows: First, we write problem \eqref{pro:sdp_ori} in terms of Schur complements and give the dual problem.  Then, we provide primal and dual feasible solutions for certain classes of polytopes and verify strong duality.
    
    We start by introducing the following classic result from linear algebra, the proof of which can be found in, e.g., \cite{boyd2004convex,lemon2016low}.
    \begin{lemma}\label{lem:psdschur}
        Suppose $M$ is a symmetric matrix of the form
        \begin{equation*}
            M=
            \begin{bmatrix}
                M_1 & M_2\\
                M_2^\top & M_3
            \end{bmatrix}.
        \end{equation*}
        Then $M\succeq 0$ if and only if $M_3\succeq 0$, $\mathrm{range}(M_2^\top)\subseteq\mathrm{range}(M_3)$, and $M_1-M_2 M_3^\dagger M_2^\top\succeq 0$.
    \end{lemma}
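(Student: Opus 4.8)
The plan is to prove both directions through an explicit congruence transformation that block-diagonalizes $M$, taking care to handle the Moore--Penrose pseudoinverse $M_3^\dagger$ correctly in the case where $M_3$ is singular. The algebraic facts I would rely on are that, since $M_3$ is symmetric, both $M_3 M_3^\dagger$ and $M_3^\dagger M_3$ equal the orthogonal projection onto $\mathrm{range}(M_3)$, and that the inclusion $\mathrm{range}(M_2^\top)\subseteq\mathrm{range}(M_3)$ is exactly the condition guaranteeing the identities $M_3 M_3^\dagger M_2^\top = M_2^\top$ and $M_2 M_3^\dagger M_3 = M_2$. These two identities are what allow the off-diagonal blocks to cancel.

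For the ``if'' direction, I would suppose $M_3\succeq 0$, $\mathrm{range}(M_2^\top)\subseteq\mathrm{range}(M_3)$, and $M_1 - M_2 M_3^\dagger M_2^\top\succeq 0$, and introduce the invertible matrix
\[
    L = \begin{bmatrix} I & -M_2 M_3^\dagger \\ 0 & I \end{bmatrix}.
\]
Computing $L M L^\top$ and invoking the projection identities above to eliminate the off-diagonal blocks, this product reduces to the block-diagonal matrix $\diag(M_1 - M_2 M_3^\dagger M_2^\top,\, M_3)$, whose diagonal blocks are positive semi-definite by hypothesis. Since congruence by an invertible matrix preserves positive semi-definiteness, we conclude $M = L^{-1}(L M L^\top)(L^{-1})^\top\succeq 0$.

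For the ``only if'' direction, I would assume $M\succeq 0$. That $M_3\succeq 0$ is immediate, since $M_3$ is a principal submatrix obtained by restricting the quadratic form $z^\top M z$ to block vectors of the form $z = (0,\, y)^\top$. To obtain the range condition, I would take any $y\in\ker(M_3)$ and evaluate $z^\top M z$ on $z = (t x,\, y)^\top$, which gives $t^2 x^\top M_1 x + 2t\, x^\top M_2 y$; requiring this to be non-negative for every scalar $t$ (and each $x$) forces the linear term to vanish, hence $M_2 y = 0$. Thus $\ker(M_3)\subseteq\ker(M_2)$, and passing to orthogonal complements yields $\mathrm{range}(M_2^\top)=(\ker M_2)^\perp\subseteq(\ker M_3)^\perp=\mathrm{range}(M_3)$. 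With the range condition established, the same congruence gives $L M L^\top = \diag(M_1 - M_2 M_3^\dagger M_2^\top,\, M_3)\succeq 0$, so its top-left block, the generalized Schur complement, is positive semi-definite.

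The main obstacle is the careful bookkeeping with the pseudoinverse: in the classical invertible-$M_3$ case the off-diagonal blocks of $L M L^\top$ vanish automatically, whereas here I must verify the projection identities $M_3 M_3^\dagger M_2^\top = M_2^\top$ and $M_2 M_3^\dagger M_3 = M_2$ and confirm they hold precisely under the range inclusion. Coupled with deriving that range inclusion from positive semi-definiteness via the test-vector argument, this is the delicate part; the remainder is the standard block factorization.
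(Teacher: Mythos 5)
Your proof is correct and complete: the block congruence by $L$, the projection identities $M_3M_3^\dagger M_2^\top=M_2^\top$ and $M_2M_3^\dagger M_3=M_2$ under the range inclusion, and the test-vector argument deriving $\ker(M_3)\subseteq\ker(M_2)$ from $M\succeq 0$ are all sound. The paper does not prove this lemma itself but simply cites \cite{boyd2004convex,lemon2016low}, and your congruence-based argument with the pseudoinverse is precisely the standard proof found in those references, so you have in effect supplied the proof the paper defers to.
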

    
    Denote $\mathrm{tr}(M_1^\top M_2)=\langle M_1, M_2\rangle$.  According to Lemma \ref{lem:psdschur}, problem \eqref{pro:sdp_ori} is equivalent to the following problem
    \begin{align}\label{pro:sdp_primal}\tag{P}
    \begin{split}
        \min_{A,B,S}~~~&\langle A, I_n\rangle + \langle B, I_m\rangle - \sum_{(i,j)\notin I^z}\lambda_{ij}S_{ij} + 1\\
        s.t.~~~& 
        \begin{bmatrix}
            A & S\\
            S^\top & B
        \end{bmatrix} - \mymathbb{1}_{(n+m)\times (n+m)}\succeq 0\\
        & S_{ij} = 0,~\text{if}~(i,j)\in I^z\\
        & A_{ii} = 2, i = 1,\ldots,n.
    \end{split}
    \end{align}
    
    Let $u=[u_1\cdots u_n]^\top\in\mathbb{R}^n$ and $w=[w_1\cdots w_{|I^z|}]^\top\in\mathbb{R}^{|I^z|}$.  Then for
    \begin{equation*}
        \mathrm{diag}(u) = \begin{bmatrix}
            u_1 & \cdots & 0\\
            \vdots & \ddots & \vdots\\
            0 & \cdots & u_n
        \end{bmatrix}~~~\text{and}~~~
        M_{ij}=\begin{cases}
            -\lambda_{ij}, &~\text{if}~(i,j)\notin I^z,\\
            w_k~\text{that corresponds to $S_{ij}$}, &~\text{if}~(i,j)\in I^z,
        \end{cases}
    \end{equation*}
    the dual problem of problem \eqref{pro:sdp_primal} is
    \begin{align}\label{pro:sdp_dual}\tag{D}
    \begin{split}
        \max_{u,w}~~~&m+n+\sum\limits_{\substack{1\le i\le n\\ 1\le j\le m}} M_{ij}-\sum\limits_{i=1}^n u_i + 1\\
        s.t.~~~& 
        \begin{bmatrix}
            I_n+\mathrm{diag}(u) & \frac{1}{2}M\\
            \frac{1}{2}M^\top & I_m
        \end{bmatrix} \succeq 0.
    \end{split}
    \end{align}

\subsection{Examples where the SDP approximation is accurate}\label{subsec:examples}
    In this subsection, we prove that for certain classes of inscribable polytopes, the solution to problem~\eqref{pro:sdp_ori} corresponds to a solution to problem \eqref{pro:ori} with rank $d+1$.  

    Without loss of generality, suppose that the polytope is inscribed in the unit sphere and $(V,H,S^{\mathrm{true}})$ is an inscription.  Then, a feasible point of problem \eqref{pro:sdp_primal} is
    \begin{align}\label{eqs:ABS}
    \begin{split}
        A^*&=V^\top V+\mymathbb{1}_{n\times n},\\
        S^*&=S^{\mathrm{true}}=\mymathbb{1}_{n\times m}-V^\top H,\\
        B^*&=H^\top H+\mymathbb{1}_{n\times n}.
    \end{split}
    \end{align}
    The objective function value of problem \eqref{pro:sdp_primal} at $(A^*,B^*,S^*)$ is
    \begin{equation*}
        f_p^*  = \langle A^*, I_n\rangle + \langle B^*, I_m\rangle - \sum_{(i,j)\notin I^z}\lambda_{ij}S^*_{ij} + 1 = 2n+m+\sum\limits_{i=1}^mh_i^\top h_i-\sum_{(i,j)\notin I^z}\lambda_{ij}S^{\mathrm{true}}_{ij} + 1.
    \end{equation*}
    We want to show that $(A^*,B^*,S^*)$ is a solution to problem \eqref{pro:sdp_primal}.  If this is true, then \begin{equation*}
        X^*=\begin{bmatrix}
            1 & \mymathbb{1}_n^\top & \mymathbb{1}_m^\top\\
            \mymathbb{1}_n & A^* & S^*\\
            \mymathbb{1}_m & (S^*)^\top & B^*
        \end{bmatrix}
    \end{equation*}
    is a solution to problem \eqref{pro:ori} with rank $d+1$.

    Now we consider the dual problem \eqref{pro:sdp_dual}.  According to Lemma \ref{lem:psdschur}, its semidefinite constraint is equivalent to
    \begin{equation}\label{eq:schur_feasible}
        I_n+\mathrm{diag}(u)-\frac{1}{4}MM^\top\succeq 0.
    \end{equation}
    If we can find $(u^*,w^*)$ that satisfy \eqref{eq:schur_feasible} and $f_p^*=f_d^*$, where $f_d^*$ is the objective function of problem~\eqref{pro:sdp_dual} evaluated at $(u^*,w^*)$, then the duality gap is zero, so $(A^*,B^*,S^*)$ is a solution to problem~\eqref{pro:sdp_primal}.  Note that if we set 
    \begin{align*}
            u_i^* &= \overline{u}, i=1,\ldots,n,\\
    		w_i^* &= \overline{w}, i=1,\ldots,\left|I^z\right|,\\
            \lambda_{ij} &= \overline{\lambda}, (i,j)\notin I^z,
    \end{align*}
    then the two conditions are simplified to
    \begin{align*}
    \begin{cases}
        \lambda_{\max}(MM^\top) \leq 4 + 4\overline{u},\\
        n+\sum_{i=1}^m \|h_i\|^2-\overline{\lambda} \sum_{(i,j)\notin I^z} S_{ij}^{\mathrm{true}}=\overline{w}|I^z| -\overline{\lambda}(nm-|I^z|)-n\overline{u}.
    \end{cases}
    \end{align*}
    In the cases we are focusing on, our primal inscribed polytopes have the property of being facet transitive, i.e., there are rigid linear transformations that send the polytope to itself, and send any of its facets to any other of its facets.  Moreover, they are centered at the origin, and every facet has the same number, say $k$, of vertices. 
    
    Under these assumptions, the norms $\|h_i\|$ (which are the norms of the vertices of the polar polytope) are always the same, $\sum_{(i,j)\notin I^z} S_{ij}^{\mathrm{true}}=mn$ and $|I^z|=km$ so the two conditions are further simplified to 
    \begin{numcases}{}
        \lambda_{\max}(MM^\top) \leq 4 + 4\overline{u} \label{eq:simpl_dualfea},\\
        n(1+\overline{u}) +m\|h_1\|^2 =(\overline{\lambda}+\overline{w}) km \label{eq:simpl_0dualgap}.
    \end{numcases}

    In the remainder of this subsection, we are going to give several classes of polytopes, as examples, where we can prove that there exists a set of $(A^*,B^*,S^*)$, $(u^*,w^*)$, and $\lambda_{ij}$ such that the duality gap is zero.

\subsubsection{Example 1: $n$-gons}\label{subsubsec:ngons}
    Consider an $n$-gon ($n\ge 3$) in $\mathbb{R}^2$.  We order the facets and vertices so that facet $i$ contains vertices $i$ and $i+1$, for $i=1,...,n-1$, and facet $n$ contains vertices $1$ and $n$.
    We set the weights 
    \begin{equation*}
        \lambda_{ij} = \overline{\lambda} = \frac{2}{n\cos^2\frac{\pi}{n}}, (i,j)\notin I^z.
    \end{equation*}
    
    \begin{lemma}
        For the given polygon and weights, the pair $(u^*,w^*)$ given by
            \begin{equation*}
    		u_i^* = \overline{u}= \tan^2\frac{\pi}{n}, i=1,\ldots,n,~~~\text{and}~~~w_j^* = \overline{w}=\frac{n-2}{n\cos^2\frac{\pi}{n}}, j=1,\ldots,\left|I^z\right|,
    	\end{equation*}
        is feasible for problem \eqref{pro:sdp_dual}.
    \end{lemma}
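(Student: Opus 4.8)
The plan is to verify the single semidefinite constraint of \eqref{pro:sdp_dual} directly. Since the proposed $(u^*,w^*)$ is constant and we take $\lambda_{ij}=\overline\lambda$ constant, Lemma~\ref{lem:psdschur} shows that the constraint is equivalent to \eqref{eq:schur_feasible}, which under this constant ansatz collapses to the scalar inequality \eqref{eq:simpl_dualfea}, namely $\lambda_{\max}(MM^\top)\le 4+4\overline u$. Thus it suffices to compute the largest eigenvalue of $MM^\top$ and compare it to $4+4\overline u = 4\sec^2\frac{\pi}{n}$. (For feasibility only \eqref{eq:simpl_dualfea} is needed; the zero-gap condition \eqref{eq:simpl_0dualgap} plays no role in this statement.)

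First I would write $M$ explicitly. With the chosen ordering, vertex $i$ lies exactly on facets $i-1$ and $i$ (indices mod $n$), so $I^z$ consists of precisely these incidences, and by the definition of $M$ we have $M_{ij}=\overline w$ when $(i,j)\in I^z$ and $M_{ij}=-\overline\lambda$ otherwise. Writing $C$ for the $0/1$ incidence matrix of the pairs in $I^z$, this gives $M=(\overline\lambda+\overline w)\,C-\overline\lambda\,\mymathbb{1}_{n\times n}$. The key structural observation is that both $C$ and $\mymathbb{1}_{n\times n}$ are circulant (each row of $C$ is the cyclic shift of the one above it), so $M$ is circulant and is simultaneously diagonalized by the discrete Fourier basis $v_k=(1,\zeta^k,\dots,\zeta^{(n-1)k})^\top$ with $\zeta=e^{2\pi\sqrt{-1}/n}$, $k=0,\dots,n-1$.

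Next I would read off the eigenvalues. On the all-ones mode $v_0$ one has $\mymathbb{1}_{n\times n}v_0=n v_0$ and $Cv_0=2v_0$ (row sums of $C$ equal $2$), so the corresponding eigenvalue of $M$ is $\mu_0=2(\overline\lambda+\overline w)-n\overline\lambda$; for $k\neq 0$ the all-ones matrix contributes nothing and $C$ has eigenvalue $1+\zeta^{-k}$, giving $\mu_k=(\overline\lambda+\overline w)(1+\zeta^{-k})$. Because $M$ is a real circulant, $MM^\top$ is circulant with eigenvalues $|\mu_k|^2$, and the identity $|1+\zeta^{-k}|^2=4\cos^2\frac{\pi k}{n}$ yields $|\mu_k|^2=4(\overline\lambda+\overline w)^2\cos^2\frac{\pi k}{n}$ for $k\neq 0$, which is maximized over nonzero modes at $k=1$ (and $k=n-1$).

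Finally I would substitute the prescribed values. The weights are tuned precisely so that $\overline\lambda+\overline w=\sec^2\frac{\pi}{n}$ and $n\overline\lambda=2\sec^2\frac{\pi}{n}$; these make $\mu_0=0$ and, at $k=1$, $|\mu_1|^2=4\sec^4\frac{\pi}{n}\cos^2\frac{\pi}{n}=4\sec^2\frac{\pi}{n}$. Hence $\lambda_{\max}(MM^\top)=4\sec^2\frac{\pi}{n}=4+4\tan^2\frac{\pi}{n}=4+4\overline u$, so \eqref{eq:simpl_dualfea} holds (with equality) and $(u^*,w^*)$ is feasible. The main work, and the only place where care is genuinely needed, lies in correctly identifying the circulant structure from the given vertex/facet ordering and in the eigenvalue bookkeeping: establishing $|1+\zeta^{-k}|^2=4\cos^2\frac{\pi k}{n}$ and confirming that the maximum over nonzero modes occurs at $k=1$. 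The remaining steps are routine substitutions.
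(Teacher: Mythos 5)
Your proof is correct, and it follows the same high-level strategy as the paper: reduce dual feasibility under the constant ansatz to the scalar condition \eqref{eq:simpl_dualfea} and verify it by computing the largest eigenvalue of the circulant matrix $MM^\top$, obtaining equality $\lambda_{\max}(MM^\top)=4\sec^2\frac{\pi}{n}=4+4\overline{u}$. Where you differ is in how the spectrum is computed. The paper (Appendix A.1) forms $MM^\top$ entrywise, identifies it as a circulant with three distinct off-diagonal values $a,b,c$, and applies the circulant eigenvalue formula --- which forces a case split between $n=3$ and $n\ge 4$ because the $b$- and $c$-bands collide for small $n$. You instead diagonalize $M$ itself via the decomposition $M=(\overline{\lambda}+\overline{w})\,C-\overline{\lambda}\,\mymathbb{1}_{n\times n}$ with $C=I+P^{-1}$ the vertex--facet incidence circulant, note that real circulants are normal so $MM^\top$ has eigenvalues $|\mu_k|^2$, and read off $\mu_0=2(\overline{\lambda}+\overline{w})-n\overline{\lambda}=0$ and $|\mu_k|^2=4(\overline{\lambda}+\overline{w})^2\cos^2\frac{\pi k}{n}$ for $k\neq 0$. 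This buys you a uniform argument with no case split and no need to multiply out $MM^\top$, and it makes transparent \emph{why} the chosen weights work: they are tuned so that $\overline{\lambda}+\overline{w}=\sec^2\frac{\pi}{n}$ and the all-ones mode is annihilated. The paper's entrywise computation is more pedestrian but requires no appeal to normality of circulants; yours does implicitly use that $MM^\top$ and $M$ share the Fourier eigenbasis, a fact you correctly justify since $M$ has real entries. Both arguments rightly observe that only \eqref{eq:simpl_dualfea} is at stake for feasibility, with \eqref{eq:simpl_0dualgap} deferred to the primal--dual matching step.
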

    \begin{proof}
        For the $w^*$ given above, $M$ is the circulant matrix whose first row is
        $$\frac{1}{n\cos^2\frac{\pi}{n}}\left(n-2, -2, -2, \cdots , -2, n-2 \right),$$
        so $\lambda_{\max}(MM^\top)=4\sec^2\frac{\pi}{n}$ (see Appendix \ref{app:ngons_eigv} for details). Therefore, we have
        $$4+4\overline{u}= 4+4\tan^2\frac{\pi}{n} = 4\sec^2\frac{\pi}{n} = \lambda_{\max}(MM^\top)  $$
        which as we saw in \eqref{eq:simpl_dualfea} implies dual feasibility.

    $\hfill\qed$
    \end{proof}

    For our primal feasible point, we choose the one corresponding to the regular $n$-gon inscribed in the unit circle. The polar polytope of this is still a regular $n$-gon, circumscribed to the same circle, whose vertices can be checked to have $\|h_i\|^2=\sec^2\frac{\pi}{n}$. Moreover, there are two vertices per facet $(k=2)$ and it is a straightforward computation to check that \eqref{eq:simpl_0dualgap} holds.  We have proven the following result.
    \begin{theorem}
        For all $n \ge 3$ and the choice of weights $\lambda_{ij} = \overline{\lambda} = \frac{2}{n}\sec^2\frac{\pi}{n}, (i,j)\notin I^z$, problem~\eqref{pro:sdp_ori} has an optimal solution of rank $3$ that certifies inscribability of the $n$-gon.
    \end{theorem}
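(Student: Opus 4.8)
The plan is to certify optimality of the regular $n$-gon primal point by exhibiting a dual feasible point of equal objective value, so that SDP duality pins the optimum of \eqref{pro:sdp_ori} to a matrix of rank $3$. Essentially all of the ingredients have been assembled in this subsection, so the proof is mostly a matter of checking that they fit together and then invoking standard duality.

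First I would take as primal candidate the point $(A^*,B^*,S^*)$ of \eqref{eqs:ABS} associated to the regular $n$-gon inscribed in the unit circle, which is feasible for \eqref{pro:sdp_primal} by construction. From the proof of Theorem \ref{thm:inscri} the corresponding
\begin{equation*}
    X^*=\begin{bmatrix}
        1 & \mymathbb{1}_n^\top & \mymathbb{1}_m^\top\\
        \mymathbb{1}_n & A^* & S^*\\
        \mymathbb{1}_m & (S^*)^\top & B^*
    \end{bmatrix}
\end{equation*}
equals $WW^\top$ for a matrix $W$ of rank $d+1=3$, so $\mathrm{rank}(X^*)=3$; moreover $S^*=S^{\mathrm{true}}$ has strictly positive entries off $I^z$, hence $X^*$ is also feasible for the original rank problem \eqref{pro:ori}. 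On the dual side I would use the point $(u^*,w^*)$ from the preceding lemma, whose feasibility is exactly the inequality \eqref{eq:simpl_dualfea}: the circulant form of $M$ gives $\lambda_{\max}(MM^\top)=4\sec^2\frac{\pi}{n}$, and with $\overline{u}=\tan^2\frac{\pi}{n}$ the identity $1+\tan^2\frac{\pi}{n}=\sec^2\frac{\pi}{n}$ yields $4+4\overline{u}=\lambda_{\max}(MM^\top)$, so the semidefinite constraint of \eqref{pro:sdp_dual} holds.

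The heart of the argument is the zero-gap check. Since the regular $n$-gon is facet-transitive, centered at the origin, with $m=n$ facets and $k=2$ vertices per facet, equality of the two objective values reduces to the scalar identity \eqref{eq:simpl_0dualgap}. I would verify it by direct substitution: using $\|h_1\|^2=\sec^2\frac{\pi}{n}$ together with $\overline{\lambda}+\overline{w}=\frac{2}{n}\sec^2\frac{\pi}{n}+\frac{n-2}{n}\sec^2\frac{\pi}{n}=\sec^2\frac{\pi}{n}$, both sides of \eqref{eq:simpl_0dualgap} collapse to $2n\sec^2\frac{\pi}{n}$, so the duality gap vanishes.

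Finally, weak duality for the minimization--maximization pair \eqref{pro:sdp_primal}--\eqref{pro:sdp_dual}, combined with the matched objective values, shows that $(A^*,B^*,S^*)$ is optimal for \eqref{pro:sdp_primal}, equivalently (via Lemma \ref{lem:psdschur}) that $X^*$ is optimal for \eqref{pro:sdp_ori}. Because any feasible $X$ satisfies $\mathrm{rank}(X)\ge\mathrm{rank}(S)\ge d+1=3$ while $\mathrm{rank}(X^*)=3$, this optimal solution realizes the minimum possible rank; by the equivalence recorded after \eqref{pro:ori} and the extraction technique in the proof of Theorem \ref{thm:inscri}, it certifies inscribability of the $n$-gon. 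I expect the only genuinely delicate input to be the eigenvalue evaluation $\lambda_{\max}(MM^\top)=4\sec^2\frac{\pi}{n}$ needed for dual feasibility, but this is already discharged by the circulant computation cited in the appendix; the remaining steps are routine substitutions and an appeal to standard SDP duality.
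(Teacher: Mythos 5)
Your proposal is correct and takes essentially the same route as the paper: the same primal point from \eqref{eqs:ABS} for the regular $n$-gon, the same dual point from the preceding lemma with feasibility via \eqref{eq:simpl_dualfea} and the circulant eigenvalue $\lambda_{\max}(MM^\top)=4\sec^2\frac{\pi}{n}$, and the same zero-gap verification of \eqref{eq:simpl_0dualgap}, where your explicit substitution showing both sides equal $2n\sec^2\frac{\pi}{n}$ simply fills in the computation the paper declares straightforward. The concluding appeal to weak duality and the rank bound $\mathrm{rank}(X)\ge\mathrm{rank}(S)\ge 3$ matches the strategy laid out at the start of Subsection \ref{subsec:examples}.
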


\subsubsection{Example 2: Simplices}\label{subsubsec:simplices}
    Consider a simplex in $\mathbb{R}^d$ with $d\ge 2$.  We order the vertices and facets such that vertex $i$ is in facet $i$, and set the weights 
    \begin{equation*}
        \lambda_{ij} = \overline{\lambda}= \frac{2d^2}{d+1},(i,j)\notin I^z.
    \end{equation*}
    
    \begin{lemma}
        For the given polytope and weights, the pair $(u^*,w^*)$ given by
            \begin{equation*}
            u_i^*  = \overline{u}= -1+d^2, i=1,\ldots,n,~~~\text{and}~~~w_i^* = \overline{w}= \frac{2d}{d+1}, i=1,\ldots,\left|I^z\right|,
    	\end{equation*}
        is feasible for problem \eqref{pro:sdp_dual}.
    \end{lemma}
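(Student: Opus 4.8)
The plan is to mirror the argument for the $n$-gons in Example~\ref{subsubsec:ngons}: since the lemma only asserts \emph{feasibility} for problem~\eqref{pro:sdp_dual}, the sole thing to verify is the dual feasibility condition~\eqref{eq:simpl_dualfea}, namely $\lambda_{\max}(MM^\top) \le 4 + 4\overline{u}$, which by Lemma~\ref{lem:psdschur} (this is exactly~\eqref{eq:schur_feasible} specialized to $u^* = \overline{u}\,\mymathbb{1}_n$) is equivalent to the semidefinite constraint of~\eqref{pro:sdp_dual}. So I would first make the support of the slack matrix explicit, read off the matrix $M$, compute its spectrum, and substitute. The combinatorics of a $d$-simplex give $n = m = d+1$, with each facet a $(d-1)$-simplex containing exactly $k = d$ vertices, so $|I^z| = km = d(d+1)$ and the slack matrix has precisely $nm - |I^z| = d+1$ nonzero entries --- one in each row and one in each column. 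Thus the support of $S$ is a permutation pattern, and since permuting the columns of $M$ merely conjugates $MM^\top$ by a permutation matrix and hence leaves its eigenvalues unchanged, I may assume these nonzeros lie on the diagonal.

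With that normalization, the definition of $M$ gives diagonal entries $-\overline{\lambda}$ (where $S_{ij}\ne 0$) and off-diagonal entries $\overline{w}$ (where $S_{ij}=0$); writing $J = \mymathbb{1}_{(d+1)\times(d+1)}$ for the all-ones matrix this reads $M = \overline{w}J - (\overline{w}+\overline{\lambda})I_{d+1}$. Using the chosen weights, $\overline{w} + \overline{\lambda} = \frac{2d}{d+1} + \frac{2d^2}{d+1} = 2d$, so $M = \frac{2d}{d+1}J - 2d\,I_{d+1}$ is symmetric and $MM^\top = M^2$. Since $J$ acts as multiplication by $d+1$ on $\mymathbb{1}_{d+1}$ and by $0$ on its orthogonal complement, $M$ has eigenvalue $\frac{2d}{d+1}(d+1) - 2d = 0$ on $\mymathbb{1}_{d+1}$ and eigenvalue $-2d$ with multiplicity $d$. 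Hence $\lambda_{\max}(MM^\top) = (2d)^2 = 4d^2$, and substituting $\overline{u} = d^2 - 1$ yields $4 + 4\overline{u} = 4d^2 = \lambda_{\max}(MM^\top)$, so~\eqref{eq:simpl_dualfea} holds with equality --- which is exactly the claimed feasibility.

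The calculation is short, so the only genuine obstacle is the structural step: one must confirm that the nonzeros of the simplex slack matrix really do form a single-nonzero-per-row-and-column pattern, so that $M$ collapses to the rank-one-plus-scalar form $\overline{w}J - (\overline{w}+\overline{\lambda})I_{d+1}$ whose spectrum is transparent. This is even cleaner than the circulant $M$ arising in the $n$-gon case, and once it is in place the eigenvalue computation and the final inequality are immediate. I would note in passing that feasibility holds at equality, which is the signature one expects when the dual point is in fact optimal and certifies the rank-$(d+1)$ primal solution.
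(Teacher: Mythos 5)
Your proof is correct and takes essentially the same route as the paper: the paper identifies the very same matrix $M$ (described as the circulant with first row $\frac{2d}{d+1}(-d,1,\ldots,1)$, which is exactly your $\overline{w}\,\mymathbb{1}_{(d+1)\times(d+1)}-(\overline{w}+\overline{\lambda})I_{d+1}$), computes $\lambda_{\max}(MM^\top)=4d^2$ in Appendix~\ref{app:simplices_eigv}, and then checks $4+4\overline{u}=4d^2$ against condition~\eqref{eq:simpl_dualfea} just as you do. One cosmetic remark: permuting the columns of $M$ leaves $MM^\top$ unchanged rather than conjugating it (row permutations are what conjugate), but either way the spectrum, and hence feasibility, is unaffected.
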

    \begin{proof}
        For the $w^*$ given above, $M$ is the circulant matrix whose first row is $$\frac{2d}{d+1}\left(-d, 1, 1, \cdots , 1, 1 \right),$$ so $\lambda_{\max}(MM^\top)=4d^2$ (see Appendix \ref{app:simplices_eigv} for details).  Therefore, we have $$4+4\overline{u}= 4d^2 = \lambda_{\max}(MM^\top)$$ which as we saw in \eqref{eq:simpl_dualfea} implies dual feasibility.

    $\hfill\qed$
    \end{proof}
    
    For our primal feasible point, we pick a regular simplex centered at the origin. Again, the polar polytope of this is still a simplex centered at the origin, but with the norm of each vertex verifying
    $\|h_i\|^2=d^2$. There are now $k=d$ points per facet and it is once again a rote verification to see that~\eqref{eq:simpl_0dualgap} holds.  We have proven the following result.
    \begin{theorem}
        For all $d \geq 2$ and the choice of weights $\lambda_{ij} = \overline{\lambda}= \frac{2d^2}{d+1}, (i,j)\notin I^z$, problem~\eqref{pro:sdp_ori} has an optimal solution of rank $d+1$ that certifies inscribability of the $d$-simplex.
    \end{theorem}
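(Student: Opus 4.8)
The plan is to follow the general recipe laid out at the beginning of Section~\ref{subsec:examples}: exhibit a primal feasible point of \eqref{pro:sdp_primal} whose associated matrix has rank $d+1$, produce a matching dual feasible point of \eqref{pro:sdp_dual}, and verify that the two objective values coincide so that weak duality forces both to be optimal. Concretely, I would take the primal triple $(A^*,B^*,S^*)$ from \eqref{eqs:ABS} associated with the regular $d$-simplex inscribed in the unit sphere and centered at the origin. This point is feasible by construction, the associated $X^*$ has rank $d+1$, and the dual point $(u^*,w^*)$ is precisely the one shown to satisfy \eqref{eq:simpl_dualfea} in the lemma immediately preceding the theorem (using the circulant structure of $M$ and the eigenvalue computation $\lambda_{\max}(MM^\top)=4d^2$ deferred to Appendix~\ref{app:simplices_eigv}). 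Thus the only genuinely new work is to check that the simplified zero-duality-gap condition \eqref{eq:simpl_0dualgap} holds for these data.

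To set up \eqref{eq:simpl_0dualgap} I would first record the combinatorial and metric data of the regular simplex: it has $n=m=d+1$ vertices and facets, it is facet-transitive and centered at the origin (so the reductions of Section~\ref{subsec:examples} apply), each facet omits exactly one vertex and hence contains $k=d$ of them, and its polar is again a regular simplex with vertex norms $\|h_i\|^2=d^2$ (equivalently, the inradius-to-circumradius ratio of a regular $d$-simplex is $1/d$). Plugging $\overline u=-1+d^2$, $\overline w=\tfrac{2d}{d+1}$, and $\overline\lambda=\tfrac{2d^2}{d+1}$ into \eqref{eq:simpl_0dualgap}, the left-hand side becomes $n(1+\overline u)+m\|h_1\|^2=(d+1)d^2+(d+1)d^2=2d^2(d+1)$, while the right-hand side is $(\overline\lambda+\overline w)\,km=2d\cdot d(d+1)=2d^2(d+1)$, so equality holds. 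Together with the dual feasibility \eqref{eq:simpl_dualfea} from the lemma, this shows the duality gap is zero.

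With the duality gap equal to zero, weak duality certifies that $(A^*,B^*,S^*)$ is optimal for \eqref{pro:sdp_primal}, hence $X^*$ is optimal for \eqref{pro:sdp_ori}. Since $X^*$ has rank $d+1$ while every feasible $X$ for \eqref{pro:ori} has rank at least $d+1$ (because $S$ carries the support of a slack matrix, so $\mathrm{rank}(S)\ge d+1$ by \cite[Lemma~3.1]{gouveia2019slack}), the same $X^*$ is simultaneously an optimal, rank-$(d+1)$ solution to the minimum rank problem \eqref{pro:ori}. By the discussion following Theorem~\ref{thm:inscri}, attaining rank $d+1$ certifies inscribability of the $d$-simplex.

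I expect the main obstacle to be the metric computation for the polar simplex, namely establishing $\|h_i\|^2=d^2$ and confirming $k=d$ vertices per facet; these are the only inputs specific to simplices, and once they are in place condition \eqref{eq:simpl_0dualgap} collapses to the algebraic identity $2d^2(d+1)=2d^2(d+1)$. Everything else is either bookkeeping inherited from the general framework of Section~\ref{subsec:examples} or has already been discharged by the preceding lemma via the circulant eigenvalue calculation.
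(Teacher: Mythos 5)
Your proposal is correct and follows essentially the same route as the paper: the primal point from \eqref{eqs:ABS} for the regular simplex inscribed in the unit sphere, the dual point $(\overline u,\overline w)=(-1+d^2,\tfrac{2d}{d+1})$ whose feasibility is the preceding lemma's circulant eigenvalue computation $\lambda_{\max}(MM^\top)=4d^2$, and zero duality gap via \eqref{eq:simpl_0dualgap}. The only difference is that the paper dismisses the gap condition as ``a rote verification'' while you carry it out explicitly (using $n=m=d+1$, $k=d$, $\|h_i\|^2=d^2$ to get $2d^2(d+1)$ on both sides), and your computation is accurate.
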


\subsubsection{Example 3: Cubes}\label{subsubsec:cubes}
    Consider a cube in $\mathbb{R}^d$ with $d\ge 2$.  Set the weights
    \begin{equation*}
        \lambda_{ij} = \overline{\lambda}= d2^{1-d}, (i,j)\notin I^z.
    \end{equation*}
    
   \begin{lemma}\label{lem:cube}
        For the given polytope and weights, the pair $(u^*,w^*)$ given by
        \begin{equation*}
            u_i^*  = -1 + d^22^{1-d}, i=1,\ldots,n,~~~\text{and}~~~w_i^* = d2^{1-d}, i=1,\ldots,\left|I^z\right|,	
    	\end{equation*}
        is feasible for problem \eqref{pro:sdp_dual}.
    \end{lemma}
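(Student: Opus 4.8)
The plan is to reduce dual feasibility to a single scalar eigenvalue bound, exactly as in Examples 1 and 2, and then to exploit the product structure of the cube rather than circulancy. The first observation is that for the cube the two constants coincide, $\overline{w}=\overline{\lambda}=d2^{1-d}$, so every entry of $M$ has the same magnitude $d2^{1-d}$, carrying a $+1$ at the incidences $(i,j)\in I^z$ and a $-1$ at the non-incidences. Writing $M=d2^{1-d}N$, where $N$ is the signed vertex--facet incidence matrix ($N_{ij}=+1$ if vertex $i$ lies on facet $j$ and $N_{ij}=-1$ otherwise), the constant choice $u_i^*=\overline{u}=-1+d^22^{1-d}$ turns the semidefinite constraint of \eqref{pro:sdp_dual}, via Lemma \ref{lem:psdschur} and \eqref{eq:schur_feasible}, into the scalar condition \eqref{eq:simpl_dualfea}, namely $\lambda_{\max}(MM^\top)\le 4+4\overline{u}=4d^22^{1-d}=d^22^{3-d}$. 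Since $\lambda_{\max}(MM^\top)=(d2^{1-d})^2\lambda_{\max}(NN^\top)$, it suffices to prove $\lambda_{\max}(NN^\top)=2^{d+1}$.

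The second step is to compute $NN^\top$ from the combinatorics of the cube. I would label the $n=2^d$ vertices by the sign vectors $v\in\{-1,1\}^d$ and the $m=2d$ facets by coordinate--sign pairs $(\ell,\pm)$, so that $N_{v,(\ell,+)}=v_\ell$ and $N_{v,(\ell,-)}=-v_\ell$. Then each facet pair contributes $v_\ell v'_\ell+(-v_\ell)(-v'_\ell)=2v_\ell v'_\ell$, giving $(NN^\top)_{v,v'}=2\langle v,v'\rangle$, that is $NN^\top=2\,V^\top V$, where $V\in\mathbb{R}^{d\times 2^d}$ is the matrix whose columns are the vertices (as in the paper's convention). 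Because the $d$ coordinate rows of $V$ are mutually orthogonal of squared norm $2^d$, we have $VV^\top=2^dI_d$; the nonzero eigenvalues of $V^\top V$ coincide with those of $VV^\top$, so they all equal $2^d$. Hence $\lambda_{\max}(NN^\top)=2\cdot 2^d=2^{d+1}$, and therefore $\lambda_{\max}(MM^\top)=(d2^{1-d})^2\,2^{d+1}=d^22^{3-d}=4+4\overline{u}$. Thus \eqref{eq:simpl_dualfea} holds with equality, and $(u^*,w^*)$ is feasible for \eqref{pro:sdp_dual}.

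The main obstacle is the eigenvalue computation $\lambda_{\max}(MM^\top)$: unlike the $n$-gon and the simplex, the cube is not cyclic, so $M$ is not circulant and the circulant eigenvalue formulas of Appendices \ref{app:ngons_eigv} and \ref{app:simplices_eigv} do not apply. The key insight that makes the computation routine is the equality $\overline{w}=\overline{\lambda}$, which collapses $M$ to a scalar multiple of the $\pm 1$ signed incidence matrix and lets the cube's coordinatewise product structure be read off directly through the Gram identity $NN^\top=2\,V^\top V$ and the orthogonality $VV^\top=2^dI_d$. Everything else --- assembling the bound and matching it against $4+4\overline{u}$ --- is bookkeeping.
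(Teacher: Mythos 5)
Your proof is correct, and at the top level it follows the paper's template: with the constant choice of $u^*$, dual feasibility for \eqref{pro:sdp_dual} reduces via Lemma \ref{lem:psdschur} and \eqref{eq:schur_feasible} to the single scalar bound \eqref{eq:simpl_dualfea}, which you then verify with equality by computing $\lambda_{\max}(MM^\top)=d^22^{3-d}=4+4\overline{u}$. Where you genuinely diverge is in how that eigenvalue is computed. The paper (Appendix \ref{app:cubescrosspolytopes_eigv}) applies the transpose trick to $M$ itself, passing from the $2^d\times 2^d$ matrix $MM^\top$ to the $2d\times 2d$ matrix $M^\top M$, which it writes out explicitly (up to the scalar $(d2^{1-d})^2$) as $2^d\left[\widetilde{m}_1\cdots \widetilde{m}_d~-\widetilde{m}_1\cdots -\widetilde{m}_d\right]$ and then analyzes by an explicit eigenvector maximization. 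You instead stay on the vertex side: factoring $M=d2^{1-d}N$ with $N$ the signed vertex--facet incidence matrix, you observe the Gram identity $NN^\top=2V^\top V$ for the $\pm 1$ vertex matrix $V\in\mathbb{R}^{d\times 2^d}$, and apply the transpose trick to $V$ rather than to $M$, landing on $VV^\top=2^dI_d$ so that $\lambda_{\max}(NN^\top)=2^{d+1}$ is immediate. Both arguments ultimately rest on the same underlying fact --- the $d$ coordinate sign patterns over the $2^d$ vertices are mutually orthogonal with squared norm $2^d$ --- but your factorization reduces to a $d\times d$ identity matrix and eliminates the appendix's eigenvector computation entirely, while also making visible why \eqref{eq:simpl_dualfea} holds with equality rather than slack. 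As a side benefit, since the cross-polytope's $M$ is (up to the scalar $d2^{1-d}$) the transpose of the cube's, your Gram identity covers Subsubsection \ref{subsubsec:crosspolytopes} with no additional work, exactly as the paper's shared appendix does.
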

    \begin{proof}
        For the $w^*$ given above, $M$ is the matrix whose $2^n$ columns are all possible $\pm 1$ vectors of length $n$, multiplied by $d2^{1-d}$.  For this matrix $\lambda_{\max}(MM^\top)=\left(d2^{1-d}\right)^22^{d+1} = d^22^{3-d}$ (see Appendix \ref{app:cubescrosspolytopes_eigv} for details).  Therefore, we have $$4+4\overline{u}= 4d^22^{1-d} =  \lambda_{\max}(MM^\top)$$ which as we saw in \eqref{eq:simpl_dualfea}, implies dual feasibility.

    $\hfill\qed$
    \end{proof}

    For our primal feasible point, we pick the inscribed regular cube centered at the origin.  This time the polar polytope of this is the cross-polytope with vertices $\pm \sqrt{d} e_i^{(d)}, i=1,...,d$, so $\|h_i\|^2=d$ and there are $k=2^{d-1}$ points per facet.  Once again, it is a simple exercise to verify \eqref{eq:simpl_0dualgap}.  We have proven the following result.
    \begin{theorem}
        For all $d \ge 2$ and the choice of weights $\lambda_{ij} =  \overline{\lambda}= d2^{1-d}, (i,j)\notin I^z$, problem~\eqref{pro:sdp_ori} has an optimal solution of rank $d+1$ that certifies inscribability of the $d$-cube.
    \end{theorem}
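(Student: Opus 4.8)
The plan is to reuse verbatim the certificate-based strategy of Section~\ref{subsec:examples} that already succeeded for $n$-gons and simplices. Since the $d$-cube is facet-transitive, centered at the origin, and has the same number of vertices on every facet, the two reductions \eqref{eq:simpl_dualfea} and \eqref{eq:simpl_0dualgap} are the only things left to check, and Lemma~\ref{lem:cube} has already disposed of the dual-feasibility condition \eqref{eq:simpl_dualfea}. So the entire content of the theorem is (i) producing a primal feasible point whose objective value matches the dual value certified by Lemma~\ref{lem:cube}, and (ii) verifying the single scalar identity \eqref{eq:simpl_0dualgap}; once both hold, weak duality forces a zero gap, making the primal point optimal and its rank exactly $d+1$.

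First I would fix the combinatorial data of the inscribed regular $d$-cube: it has $n = 2^d$ vertices and $m = 2d$ facets, and each facet contains exactly half of the vertices, so $k = 2^{d-1}$. Inscribing in the unit sphere forces the vertices to be $\tfrac{1}{\sqrt d}(\pm 1,\ldots,\pm 1)^\top$, which also guarantees $A^*_{ii} = \|v_i\|^2 + 1 = 2$ as required. The polar polytope is then the cross-polytope whose vertices are the facet normals $h_j = \pm\sqrt d\, e_i^{(d)}$, so $\|h_1\|^2 = d$. Assembling $V$ and $H$ into $(A^*, B^*, S^*)$ via \eqref{eqs:ABS} gives a primal feasible point, with positive semidefiniteness of the block constraint in \eqref{pro:sdp_primal} following from the factorization of $\left[\begin{smallmatrix} A^* & S^*\\ (S^*)^\top & B^*\end{smallmatrix}\right] - \mymathbb{1}_{(n+m)\times(n+m)}$ as $[\,V\ \ -H\,]^\top[\,V\ \ -H\,]\succeq 0$.

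It then remains to substitute the numbers from Lemma~\ref{lem:cube}, namely $\overline u = -1 + d^2 2^{1-d}$ and $\overline\lambda = \overline w = d\,2^{1-d}$, into \eqref{eq:simpl_0dualgap}. The left-hand side becomes
\begin{equation*}
    n(1+\overline u) + m\|h_1\|^2 = 2^d\cdot d^2 2^{1-d} + 2d\cdot d = 2d^2 + 2d^2 = 4d^2,
\end{equation*}
while the right-hand side becomes
\begin{equation*}
    (\overline\lambda + \overline w)\,km = \big(2\,d\,2^{1-d}\big)\,2^{d-1}\,2d = 4d^2.
\end{equation*}
The two agree, so the duality gap vanishes; hence $(A^*,B^*,S^*)$ is optimal for \eqref{pro:sdp_primal}, the associated $X^*$ solves \eqref{pro:ori} with rank $d+1$, and this certifies inscribability of the $d$-cube.

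The visible calculation above is routine; the substantive obstacle sits inside Lemma~\ref{lem:cube}, in the spectral identity $\lambda_{\max}(MM^\top) = d^2 2^{3-d}$. Writing $M = d\,2^{1-d}\tilde M$ with $\tilde M \in \{\pm 1\}^{n\times m}$, one checks that $MM^\top$ is a scalar multiple of the Gram matrix of the $2^d$ vertex sign-vectors, whose nonzero eigenvalues are all equal to $2^d$; carrying this reduction out cleanly (done in the referenced appendix) is where the real effort lies. The only other genuinely creative ingredient is guessing the constant dual variables $\overline u, \overline w$ and the weight $\overline\lambda$ that simultaneously saturate \eqref{eq:simpl_dualfea} and satisfy \eqref{eq:simpl_0dualgap}; everything downstream is bookkeeping.
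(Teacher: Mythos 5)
Your proposal is correct and takes essentially the same route as the paper: the identical dual certificate from Lemma~\ref{lem:cube}, the identical primal point (the regular cube inscribed in the unit sphere, whose polar cross-polytope gives $\|h_j\|^2=d$ and $k=2^{d-1}$), and the zero-gap identity \eqref{eq:simpl_0dualgap}, which the paper dismisses as a ``simple exercise'' and you verify explicitly with both sides equal to $4d^2$. There are no gaps; your added details (the count $n=2^d$, $m=2d$, the factorization $\left[\begin{smallmatrix} A^* & S^*\\ (S^*)^\top & B^*\end{smallmatrix}\right]-\mymathbb{1}_{(n+m)\times(n+m)}=[\,V\ {-H}\,]^\top[\,V\ {-H}\,]\succeq 0$ for primal feasibility) only make explicit what the paper's general setup in Subsection~\ref{subsec:examples} already provides.
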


\subsubsection{Example 4: Cross-polytopes}\label{subsubsec:crosspolytopes}
    The last case we will consider is a $d$-dimensional cross-polytope with $d\ge 2$. This is very similar to the previous example. We set the weights 
    \begin{equation*}
        \lambda_{ij} = \overline{\lambda}= 1.
    \end{equation*}
    
    \begin{lemma}
        For the given polytope and weights, the pair $(u^*,w^*)$ given by
        \begin{equation*}
            u_i^* = -1 + 2^{d-1}, i=1,\ldots,n,~~~\text{and}~~~w_i^* = 1, i=1,\ldots,\left|I^z\right|,	
    	\end{equation*}
        is feasible for problem \eqref{pro:sdp_dual}.
    \end{lemma}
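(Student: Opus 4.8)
The plan is to verify the single simplified dual-feasibility inequality \eqref{eq:simpl_dualfea}, namely $\lambda_{\max}(MM^\top) \le 4 + 4\overline{u}$, exactly as in the cube case. By Lemma~\ref{lem:psdschur} the semidefinite constraint of \eqref{pro:sdp_dual} reduces through \eqref{eq:schur_feasible}, with $u_i^* = \overline{u}$, to precisely this eigenvalue bound. Since $\overline{u} = -1 + 2^{d-1}$, the right-hand side is $4 + 4(-1 + 2^{d-1}) = 2^{d+1}$, so it will suffice to show $\lambda_{\max}(MM^\top) = 2^{d+1}$, giving feasibility with equality (which is also what is needed for the zero duality gap asserted afterwards).

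First I would write down $M$ explicitly. Because $\overline{\lambda} = 1$ and $\overline{w} = 1$, every entry of $M$ is $\pm 1$, equal to $+1$ on the zero pattern $I^z$ (a vertex lying on a facet) and $-1$ off it. Indexing the $n = 2d$ vertices of the cross-polytope by $(k,s)$ with $k\in\{1,\dots,d\}$ and $s\in\{+1,-1\}$ (the vertex $s\,e_k^{(d)}$), and the $m = 2^d$ facets by their outer-normal sign vectors $\epsilon\in\{+1,-1\}^d$, the incidence ``$s\,e_k^{(d)}$ lies on facet $\epsilon$'' holds iff $s = \epsilon_k$. Hence $M_{(k,s),\epsilon} = s\,\epsilon_k$, so that $M$ is exactly the transpose of the $\pm 1$ pattern matrix appearing in the cube case, reflecting that the cross-polytope is the polar dual of the cube. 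Getting this incidence and the resulting sign pattern right is the step that demands the most care.

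Next I would compute $MM^\top$. From $M_{(k,s),\epsilon} = s\,\epsilon_k$ one gets $(MM^\top)_{(k,s),(k',s')} = s\,s'\sum_{\epsilon\in\{\pm1\}^d}\epsilon_k\epsilon_{k'}$, where the inner sum equals $2^d$ if $k = k'$ and $0$ otherwise. Thus $MM^\top$ is block diagonal in $k$, each $2\times 2$ block being $2^d\left[\begin{smallmatrix} 1 & -1 \\ -1 & 1\end{smallmatrix}\right]$, with eigenvalues $0$ and $2^{d+1}$; this is the content of Appendix~\ref{app:cubescrosspolytopes_eigv}, shared with the cube since $M^\top M$ and $MM^\top$ have the same nonzero spectrum. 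Therefore $\lambda_{\max}(MM^\top) = 2^{d+1}$.

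Finally, combining the two computations gives $4 + 4\overline{u} = 2^{d+1} = \lambda_{\max}(MM^\top)$, so \eqref{eq:simpl_dualfea} holds with equality and, by Lemma~\ref{lem:psdschur}, the pair $(u^*,w^*)$ is feasible for \eqref{pro:sdp_dual}. The only genuine obstacle is the eigenvalue computation of $MM^\top$, dispatched either directly via the block structure above or by the polar-duality identification with the cube; as the text notes, this case is ``very similar to the previous example,'' and everything beyond the eigenvalue bound is routine bookkeeping and arithmetic.
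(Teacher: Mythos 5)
Your proposal is correct and follows essentially the same route as the paper: reduce dual feasibility via Lemma~\ref{lem:psdschur} and \eqref{eq:schur_feasible} to the single inequality \eqref{eq:simpl_dualfea}, identify the cross-polytope's $M$ (up to transposition and the scale factor $d2^{1-d}$) with the cube's sign-pattern matrix, and verify $\lambda_{\max}(MM^\top)=2^{d+1}=4+4\overline{u}$ with equality. Your direct block-diagonal computation of $MM^\top$, with $2\times 2$ blocks $2^d\left[\begin{smallmatrix}1 & -1\\ -1 & 1\end{smallmatrix}\right]$, is a self-contained and arguably cleaner substitute for the eigenvector argument of Appendix~\ref{app:cubescrosspolytopes_eigv}, but the substance of the proof is the same.
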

    \begin{proof}
        Notice that $M$ is the transpose of the matrix $M$ in the proof of Lemma \ref{lem:cube} divided by $d2^{1-d}$, we have $\lambda_{\max}(MM^\top)=2^{d+1}$.  Therefore, \eqref{eq:simpl_dualfea} is still verified and implies dual feasibility.

    $\hfill\qed$
    \end{proof}
    
    Now we pick our primal feasible point the canonical cross-polytope with vertices $\pm e_i^{(d)}$.  Its polar polytope is the cube with vertices with coordinates $\pm 1$, so $\|h_i\|^2=d$ and there are $k=d$ vertices per facet.  We can check that \eqref{eq:simpl_0dualgap} is verified, proving the following result.  
    \begin{theorem}
        For all $d \ge 2$ and the choice of weights $\lambda_{ij} =  \overline{\lambda}= 1, (i,j)\notin I^z$, problem~\eqref{pro:sdp_ori} has an optimal solution of rank $d+1$ that certifies inscribability of the $d$-cross-polytope.
    \end{theorem}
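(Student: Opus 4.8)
The plan is to follow exactly the template used in the three preceding examples: exhibit a primal feasible point of \eqref{pro:sdp_primal} coming from the canonical inscribed cross-polytope, use the Lemma just proved to guarantee dual feasibility, and then close the argument by verifying that the duality gap vanishes. Because the cross-polytope is facet-transitive, centered at the origin, and has the same number of vertices on every facet, all of the simplifications that produced the two scalar conditions \eqref{eq:simpl_dualfea} and \eqref{eq:simpl_0dualgap} remain valid here, so the entire proof reduces to checking these two conditions for the specific combinatorial parameters of the $d$-cross-polytope.

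First I would record the relevant data of the canonical cross-polytope with vertices $\pm e_i^{(d)}$: it has $n=2d$ vertices and $m=2^d$ facets, each facet $\{x:\sum_i\epsilon_i x_i\le 1\}$ with $\epsilon\in\{\pm 1\}^d$ containing exactly the $k=d$ vertices $\epsilon_i e_i^{(d)}$. Its polar is the cube $[-1,1]^d$, whose vertices are the facet normals $h_i\in\{\pm 1\}^d$, so $\|h_i\|^2=d$ for every $i$. I would then take the primal feasible point $(A^*,B^*,S^*)$ from \eqref{eqs:ABS} associated with this inscription; since every vertex lies on the unit sphere, the diagonal of $A^*=V^\top V+\mymathbb{1}_{n\times n}$ is constant, and Theorem \ref{thm:inscri} guarantees that the corresponding matrix $X^*$ is positive semidefinite of rank $d+1$. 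This is the candidate optimizer of \eqref{pro:ori}.

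Next I would invoke the Lemma with $\overline{u}=-1+2^{d-1}$, $\overline{w}=1$, and $\overline{\lambda}=1$: there $M$ is, up to the scalar $d2^{1-d}$, the transpose of the cube's $M$, which gives $\lambda_{\max}(MM^\top)=2^{d+1}=4+4\overline{u}$, so dual feasibility \eqref{eq:simpl_dualfea} holds (with equality). It then remains only to confirm the zero-gap equation \eqref{eq:simpl_0dualgap}, which is precisely the condition $f_p^*=f_d^*$. Substituting $n=2d$, $m=2^d$, $k=d$, $\|h_1\|^2=d$, $1+\overline{u}=2^{d-1}$, and $\overline{\lambda}+\overline{w}=2$ yields $n(1+\overline{u})+m\|h_1\|^2=d2^d+d2^d=d2^{d+1}$ on the left and $(\overline{\lambda}+\overline{w})km=2\cdot d\cdot 2^d=d2^{d+1}$ on the right, so the two sides agree.

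With both \eqref{eq:simpl_dualfea} and \eqref{eq:simpl_0dualgap} established, weak duality forces the primal feasible point $(A^*,B^*,S^*)$ and the dual feasible point $(u^*,w^*)$ to be optimal, so $X^*$ is an optimal solution of \eqref{pro:sdp_ori} of rank $d+1$; by the discussion following \eqref{pro:ori}, a solution of rank $d+1$ certifies inscribability of the $d$-cross-polytope. The only genuinely delicate ingredient, the spectral identity $\lambda_{\max}(MM^\top)=2^{d+1}$, has already been handled in the Lemma by exploiting the cube--cross-polytope duality, so no real obstacle remains; the theorem reduces to the single arithmetic identity \eqref{eq:simpl_0dualgap}, which holds because the facet size $k=d$ and vertex count $n=2d$ of the cross-polytope balance exactly against the polar vertex norm $\|h_1\|^2=d$.
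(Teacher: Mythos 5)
Your proposal is correct and follows essentially the same route as the paper: the paper likewise takes the primal point \eqref{eqs:ABS} for the canonical cross-polytope with vertices $\pm e_i^{(d)}$ (polar cube, $\|h_i\|^2=d$, $k=d$ vertices per facet), invokes the dual-feasibility lemma via the cube--cross-polytope relation $\lambda_{\max}(MM^\top)=2^{d+1}=4+4\overline{u}$, and closes the duality gap through \eqref{eq:simpl_0dualgap}. Your explicit computation $n(1+\overline{u})+m\|h_1\|^2=d2^{d+1}=(\overline{\lambda}+\overline{w})km$ is exactly the verification the paper leaves to the reader as ``we can check that \eqref{eq:simpl_0dualgap} is verified.''
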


\section{Algorithms}\label{sec:algs}
\subsection{SDP}
    The first algorithm to solve the inscribability problem is to solve the SDP problem~\eqref{pro:sdp_ori} and check if the solution gives an inscription.  In particular, to solve the SDP problem~\eqref{pro:sdp_ori}, we use CVX, which is a MATLAB package for solving convex optimization problems \cite{grant2008graph,grant2014cvx}.
    
    Clearly, the weights $\lambda_{ij}$ influence the solution.  We will provide and compare three different methods of tuning $\lambda_{ij}$ in our numerical experiments (see Section \ref{sec:numexp} for details).  We note that even if the SDP fails to provide a correct rank solution, its solution may still correspond to an inscription.  The reason is twofold.  First, rank computations are subject to numerical errors and thus the computed rank may not be accurate.  Second, the solution of SDP may correspond to a realization that is very close to an inscription.  This is especially possible for simplicial polytopes, as their combinatorial types are stable under small perturbations of vertices.

    Therefore, in the numerical experiments, we first solve the SDP problem, then, regardless of the rank of solutions, we extract the $V$ matrix from the solution (according to the proof of Theorem~\ref{thm:inscri}), scale the vertices to be on the sphere, and check if this is an inscription.

\subsection{Alternating projection}
    Another way to solve problem \eqref{pro:ori} is via the alternating projection (AP) algorithm between the constraint set of problem \eqref{pro:ori} and the set of matrices of rank $d+1$ \cite{fazel2004rank}.  For two closed convex sets with a nonempty intersection, the AP algorithm is known to converge globally to a point in the intersection \cite{bauschke2023introduction}.  However, in our cases the set of matrices of rank $d+1$ is non-convex and thus the algorithm only has local convergence \cite{bauschke2013restrictedb,bauschke2013restricteda,lewis2009local,noll2016local}.

    Let $X\in\mathbb{R}^{(n+m+1)\times(n+m+1)}$.  The projection of $X$ onto the constraint set of problem \eqref{pro:ori}, denoted by $P_\Omega(X)$, can be computed by solving the following convex optimization problem
    \begin{align}\label{pro:ProjonConstSet}
        \begin{split}
            \min_{P_\Omega(X)}~~~&\left\|P_\Omega(X)-X\right\|\\
            s.t.~~~& P_\Omega(X) = 
            \begin{bmatrix}
                1 & \mymathbb{1}_n^\top & \mymathbb{1}_m^\top\\
                \mymathbb{1}_n & A & S\\
                \mymathbb{1}_m & S^\top & B
            \end{bmatrix} \succeq 0\\
            & S_{ij} = 0,~\text{if}~(i,j)\in I^z\\
            & S_{ij} > 0,~\text{if}~(i,j)\notin I^z\\
            & A_{ii} = 2, i = 1,\ldots,n.
        \end{split}
    \end{align}
    In particular, we use CVX \cite{grant2008graph,grant2014cvx} to solve problem \eqref{pro:ProjonConstSet}.
    
    The projection of $X$ onto the set of matrices of rank $d+1$, denoted by $P_R(X)$ can be computed by singular value decomposition (SVD) \cite{eckart1936approximation}.  Suppose the SVD of $X$ is $$X=\sum\limits_{i=1}^{n+m+1}\sigma_iu_iv_i^\top$$ where $\sigma_1\ge\cdots\ge\sigma_{n+m+1}\ge 0$, then $$P_R(X)=\sum\limits_{i=1}^{d+1}\sigma_iu_iv_i^\top.$$

    The AP algorithm for solving problem \eqref{pro:ori} is described in Algorithm \ref{alg:ap}.
    \begin{algorithm}[!htb]\caption{Alternating projection algorithm for solving problem \eqref{pro:ori}}\label{alg:ap}
    \SetKwRepeat{Do}{do}{while}
    \DontPrintSemicolon
        \KwIn{Starting point $X\in\mathbb{R}^{(n+m+1)\times(n+m+1)}$; stopping tolerance $\epsilon$.}
        
        \Do{$E>\epsilon$}
        {  
            Compute the SVD $X=\sum\limits_{i=1}^{n+m+1}\sigma_iu_iv_i^\top$ where $\sigma_1\ge\cdots\ge\sigma_{n+m+1}\ge 0$.
            
            Set $Y=\sum\limits_{i=1}^{d+1}\sigma_iu_iv_i^\top$.
            
            Compute $P_\Omega(Y)$ by solving convex optimization problem \eqref{pro:ProjonConstSet}.

            Update $X=P_\Omega(Y)$ and $E=\|X-Y\|$.
        }
    \end{algorithm}

\subsection{Simplified alternating projection}
    Instead of computing the true projection of $X$ on $\Omega$, we can compute an approximation $\widetilde{P}_\Omega(X)$ by replacing specified elements with their fixed values, as shown in Algorithm \ref{alg:sap}.
    \begin{algorithm}[!htb]\caption{Simplified alternating projection algorithm for solving problem \eqref{pro:ori}}\label{alg:sap}
    \SetKwRepeat{Do}{do}{while}
    \DontPrintSemicolon
        \KwIn{Starting point $X\in\mathbb{R}^{(n+m+1)\times(n+m+1)}$; stopping tolerance $\epsilon$.}
        
        \Do{$E>\epsilon$}
        {  
            Compute the SVD $X=\sum\limits_{i=1}^{n+m+1}\sigma_iu_iv_i^\top$ where $\sigma_1\ge\cdots\ge\sigma_{n+m+1}\ge 0$.
            
            Set $Y=\sum\limits_{i=1}^{d+1}\sigma_iu_iv_i^\top$ and denote
            \begin{equation*}
                Y = \begin{bmatrix}
                    a & b_1^\top & b_2^\top\\
                    b_1 & Y_1 & Y_2\\
                    b_2 & Y_2^\top & Y_3
                \end{bmatrix}~\text{where}~a\in\mathbb{R},b_1\in\mathbb{R}^{n\times 1},b_2\in\mathbb{R}^{m\times 1}, Y_1\in\mathbb{R}^{n\times n}, Y_2\in\mathbb{R}^{n\times m}, Y_3\in\mathbb{R}^{m\times m}.
            \end{equation*}
            
            Compute $\widetilde{P}_\Omega(Y)$ by setting
            \begin{equation*}
                \widetilde{P}_\Omega(Y) = \begin{bmatrix}
                    1 & \mymathbb{1}_n^\top & \mymathbb{1}_m^\top\\
                    \mymathbb{1}_n & \widetilde{Y}_1 & \widetilde{Y}_2\\
                    \mymathbb{1}_m & \widetilde{Y}_2^\top & Y_3
                \end{bmatrix}
            \end{equation*}
            where
            \begin{equation*}
                \left(\widetilde{Y}_1\right)_{ij} = \begin{cases}
                    2, &~\text{if}~i=j,\\
                    \left(Y_1\right)_{ij}, &~\text{if otherwise},\\
                \end{cases}~~~\text{and}~~~
                \left(\widetilde{Y}_2\right)_{ij} = \begin{cases}
                    0, &~\text{if}~(i,j)\in I^z,\\
                    \left(Y_2\right)_{ij}, &~\text{if otherwise}.\\
                \end{cases}
            \end{equation*}

            Update $X=\widetilde{P}_\Omega(Y)$ and $E=\|X-Y\|$.
        }
    \end{algorithm}
    
    Clearly, this $\widetilde{P}_\Omega(X)$ may not be the same as the true projection, which will influence the accuracy of the algorithm.  However, as we do not need to compute the exact projection onto $\Omega$, this would speed up the algorithm and make it adaptable to more general settings (e.g., non-convex $\Omega$).  For simplicity, in the remainder of this paper, we denote this algorithm by SAP.

\section{Numerical experiments}\label{sec:numexp}
    In this section, we design numerical experiments to compare the performance of the three algorithms in Section \ref{sec:algs}.  In particular, we compare the accuracy and runtime of the three algorithms.  To check the accuracy of each algorithm, we first extract the vertices from the solution $X$ according to the algorithm given by the proof of Theorem \ref{thm:inscri}.  Then, we use polymake \cite{Gawrilow2000} to check if the vertices correspond to an inscription of the given polytope.
    
    The test problem sets we use are described in Subsection \ref{subsec:testsets}.  We provide methods to tune the weights $\lambda_{ij}$ in the SDP problem \eqref{pro:sdp_ori} in Subsection \ref{subsec:tunelmd}.  For the AP and SAP algorithms, we use the solution of the SDP problem \eqref{pro:sdp_ori} as the starting point.  The numerical results are presented in Subsection \ref{subsec:numres} and Appendix \ref{app:morenumres}.

\subsection{Test problems}\label{subsec:testsets}
    The numerical experiments are done on two test problem sets. In the first set, we randomly generate 100 inscribable polytopes with $n$ vertices in dimension $d$, where $8\le n\le 10$ and $5\le d\le 8$.  For each polytope, we uniformly generate $n$ points on the $(d-1)$-sphere, therefore the polytope is simplicial with probability one.  In the second test set, we use the list of simplicial 3-spheres with $n$ vertices provided by Firsching \cite{firsching2017realizability} (\url{https://firsching.ch/polytopes}).  The list consists of realizations of polytopal simplicial 3-spheres with $5\le n\le 10$, most of which are inscribed in the unit sphere. We only use the inscribed realizations for our experiments.

    In the first test set, the number of different combinatorial types for each parameter setting is shown in Table \ref{tab:numcombtype}.
    \begin{table}[!htb]
    \caption{Number of different combinatorial types in the first test set}
    \label{tab:numcombtype}
    \centering
    \begin{tabular}{ccccccccccc}
    \hline
    $n8d5$ & $n9d5$ & $n10d5$ & $n8d6$ & $n9d6$ & $n10d6$ & $n9d7$ & $n10d7$ & $n10d8$\\ \hline
    7 & 59 & 92 & 3 & 13 & 89 & 3 & 20 & 3\\
    \hline
    \end{tabular}
    \end{table}\FloatBarrier
    
    We note that although the numbers of different combinatorial types when $n=d+2$ and $n=d+3$ are relatively small in Table \ref{tab:numcombtype}, they cover most of the possible combinatorial types.  In fact, as mentioned in \cite{grunbaum2003convex}, when $n=d+2$, there exist $\floor*{\frac{d}{2}}$ different combinatorial types; when $n=d+3$, there exists 
    \begin{equation*}
        2^{\floor*{\frac{d}{2}}} - \floor*{\frac{d+4}{2}} + \frac{1}{4(d+3)}\sum\limits_{\text{$h$ odd divisor of $d+3$}}\varphi(h)2^{\frac{d+3}{h}}
    \end{equation*}
    different combinatorial types, where $\floor*{\cdot}$ is the floor function and $\varphi(\cdot)$ is the Euler's $\varphi$-function.  Therefore, when $n=8$ and $d=5$, there are 8 different combinatorial types; when $n=8$ and $d=6$, there are 3 different combinatorial types; when $n=9$ and $d=6$, there are 18 different combinatorial types; when $n=9$ and $d=7$, there are 3 different combinatorial types; when $n=10$ and $d=7$, there are 29 different combinatorial types; when $n=10$ and $d=8$, there are 4 different combinatorial types.

\subsection{Tuning $\lambda_{ij}$}\label{subsec:tunelmd}
    We provide and compare three methods to tune the weights $\lambda_{ij}$ in the SDP problem \eqref{pro:sdp_ori}.  We note that the third method uses the inscription information of a polytope while the other two methods do not need any inscription information.  

    The first method is based on observation of the examples in Subsection \ref{subsec:examples}, which suggests that a potential good choice of $\lambda_{ij}$ would be all $\lambda_{ij}=\frac{2d}{n}$.  

    In the second method, we tune $\lambda_{ij}$ according to a heuristic described in Algorithm~\ref{alg:tunelmdheuristic}.  In each iteration, we extract the vertices from the SDP solution and check if the structure of the polytope constructed from these vertices matches the structure presented by the true slack matrix, i.e., if the zero pattern of the slack matrix corresponding to the SDP solution matches that of the true slack matrix.  If the structure matches, then an inscription is found and the algorithm stops; otherwise, for each facet that does not match, we increase all corresponding $\lambda_{ij}$ by multiplying them with a fixed rate $\lambda^{\mathrm{inc}}$.  We repeat this process until an inscription is found or the maximum $\lambda_{ij}$ is bigger than $\lambda^{\max}$.  In our tests, we use Algorithm~\ref{alg:tunelmdheuristic} with all $\lambda_{ij}^{\mathrm{init}}=\frac{2d}{n}$, $\lambda^{\mathrm{inc}}=\frac{n}{d}$, and $\lambda^{\max}=\frac{2d}{n}(\frac{n}{d})^{10}$, i.e., we stop if an inscription is found or there is at least one facet that does not match for more than 10 iterations.
    \begin{algorithm}[!htb]\caption{Solving the SDP problem \eqref{pro:sdp_ori} with a heuristic of tuning $\lambda_{ij}$}\label{alg:tunelmdheuristic}
        \KwIn{Polytope $P$ with slack matrix $S_P$; initial weights $\lambda_{ij}^{\mathrm{init}}$; weights increase rate $\lambda^{\mathrm{inc}}$; maximum weight $\lambda^{\max}$.}
        Set $\lambda_{ij}=\lambda_{ij}^{\mathrm{init}}$ for $i=1,\ldots,n$ and $j=1,\ldots,m$.
        
        \While{$\max\{\lambda_{ij}:i=1,\ldots,n,j=1,\ldots,m\}\le \lambda^{\max}$}
        {
            Solve the SDP problem \eqref{pro:sdp_ori} with current $\lambda_{ij}$ and denote the solution by $X$.
            
            Extract the $V=[v_1\cdots v_n]$ matrix from $X$ and normalize $v_1,\ldots,v_n$.
            
             
            Compare the structure $\mathrm{conv}(v_1,\ldots,v_n)$ to the structure presented by slack matrix $S_P$ and denote by $F^{\mathrm{bad}}$ the set of indices of facets that do not match.
                     
            \uIf{$F^{\mathrm{bad}}=\varnothing$}
            {
                Return $\mathrm{conv}(v_1,\ldots,v_n)$ as an inscription of $P$
            }
            \Else
            {
                Set $\lambda_{ij}=\lambda^{\mathrm{inc}}\lambda_{ij}$ for each $(i,j)\in \{1,\ldots,n\}\times F^{\mathrm{bad}}$.
            }
        }
        Return $\mathrm{conv}(v_1,\ldots,v_n)$.
    \end{algorithm}

    For the third method, as all the test problems we used are inscribable and we already know an inscription, we can tune the weights $\lambda_{ij}$ by minimizing the duality gap of the SDP problem~\eqref{pro:sdp_ori} for the given inscription.  Thus, if it is possible to find an inscription as a solution to problem \eqref{pro:sdp_ori}, then this method would find the weights to produce this solution.  However, to construct the slack matrix from the inscription, we need to transform the polytope such that it contains the origin in the interior and all vertices are on the unit sphere. 
    
    According to Lemma \ref{lem:0ininterior}, we preprocess the inscription by Algorithm~\ref{alg:preprocesspolys}.  Then, we tune $\lambda_{ij}$ such that the duality gap of the SDP problem~\eqref{pro:sdp_ori} is minimized for the preprocessed inscription.
    \begin{algorithm}[!htb]\caption{Preprocessing the inscription of polytopes}\label{alg:preprocesspolys}
    \SetKwRepeat{Do}{do}{while}
    \DontPrintSemicolon
        \KwIn{Vertices of the inscribed polytope $\{v_1,\ldots,v_n\}$.}
        Compute the center of the polytope $v_c=\sum\limits_{i=1}^n v_i\slash n$.

        Find the rotation matrix $A$ such that $Av_c=[\|v_c\|,0,\ldots,0]^\top$.

        \For{$i=1,\ldots,n$}
        {
            Rotate $v_i$ and denote $\widetilde{v}_i=Av_i$.
            
            Denote $\widetilde{v}_i=[\widetilde{v}_i^1,\ldots,\widetilde{v}_i^d]$.

            Apply the projective transformation to $\widetilde{v}_i$ and get
            \begin{equation*}
                w_i=\left[w_i^1,\ldots,w_i^d\right],~~\text{where}~~
                w_i^j=\begin{cases}
                    \frac{\widetilde{v}_i^1-\|v_c\|}{1-\|v_c\|\widetilde{v}_i^1}, &\text{if $j=1$},\\
                    \frac{\sqrt{1-\|v_c\|^2}\widetilde{v}_i^j}{1-\|v_c\|\widetilde{v}_i^1}, &\text{if $j>1$}.
                \end{cases}
            \end{equation*}

            Return $\{w_1,\ldots,w_n\}$ as the new vertices of the inscribed polytope.
        }
    \end{algorithm}\FloatBarrier

\subsection{Numerical results}\label{subsec:numres}
    In this subsection, we present the results of the numerical experiment.  For simplicity, we denote the three methods of tuning $\lambda_{ij}$ by the following:
    \begin{itemize}
        \item $\lambda^c$: tuning $\lambda_{ij}$ by setting all $\lambda_{ij}=\frac{2d}{n}$;
        \item $\lambda^h$: tuning $\lambda_{ij}$ by Algorithm \ref{alg:tunelmdheuristic};
        \item $\lambda^*$: tuning $\lambda_{ij}$ by minimizing the duality gap at the given inscription.
    \end{itemize}
    We make use of the notations SDP-$\lambda$, SAP-$\lambda$, and AP-$\lambda$, where $\lambda\in\{\lambda^c,\lambda^h,\lambda^*\}$, to denote algorithms.  The SDP-$\lambda$ notation means solving the SDP problem \eqref{pro:sdp_ori} with $\lambda_{ij}$ tuned by the method determined by $\lambda$.  The SAP-$\lambda$ notation means the SAP algorithm with the starting point set to be the solution of SDP-$\lambda$.  The AP-$\lambda$ notation means the AP algorithm with the starting point set to be the solution of SDP-$\lambda$.  Under this setting, if the solution of SDP-$\lambda$ already gives an inscription, then SAP-$\lambda$ and AP-$\lambda$ will not move away from the starting point and thus the accuracy of SAP-$\lambda$ and AP-$\lambda$ are at least as good as SDP-$\lambda$.

    We note that algorithms based on $\lambda^*$ are not implementable for polytopes without a known inscription.  We only use their results as a reference to demonstrate the effectiveness and efficiency of algorithms based on the other two tuning methods. 

\subsubsection{Results on random polytopes}
    Figure \ref{fig:randacc} and Table \ref{tab:randtime} present the accuracy and runtime results on the first test set.  The complete data corresponding to Figure \ref{fig:randacc} can be found in Appendix \ref{app:morenumres}.

    In Figure \ref{fig:randacc}, we use three different colors to represent the three different ways of tuning $\lambda_{ij}$ described in Subsection \ref{subsec:tunelmd}.  We use plain colored bars to present the results of SDP-$\lambda$.  For SAP-$\lambda$ and AP-$\lambda$, as their accuracy is at least as good as SDP-$\lambda$, we stack their results onto the bars of SDP-$\lambda$ and use patterned bars to represent their improvements over SDP-$\lambda$.
    
    From Figure \ref{fig:randacc} we can see that SDP-$\lambda^*$ and SDP-$\lambda^h$ have competitive accuracy and are clearly more accurate than SDP-$\lambda^c$.  The SAP algorithm has an obvious accuracy improvement on SDP-$\lambda^*$ and SDP-$\lambda^c$, while having no improvements on SDP-$\lambda^h$.  These demonstrate the effectiveness of Algorithm \ref{alg:tunelmdheuristic}.
    
    The AP algorithm improves the accuracy of SDP-$\lambda^*$ even more than the SAP algorithm.  However, as shown in Table~\ref{tab:randtime}, the AP algorithm takes a much longer time than other algorithms so we only run AP-$\lambda^*$ to reveal the potential of the AP algorithm in terms of accuracy.  Future research may explore this potential and improve the efficiency of the AP algorithm.
    \begin{figure}[!htb]
         \centering
         \includegraphics[width=1\textwidth]{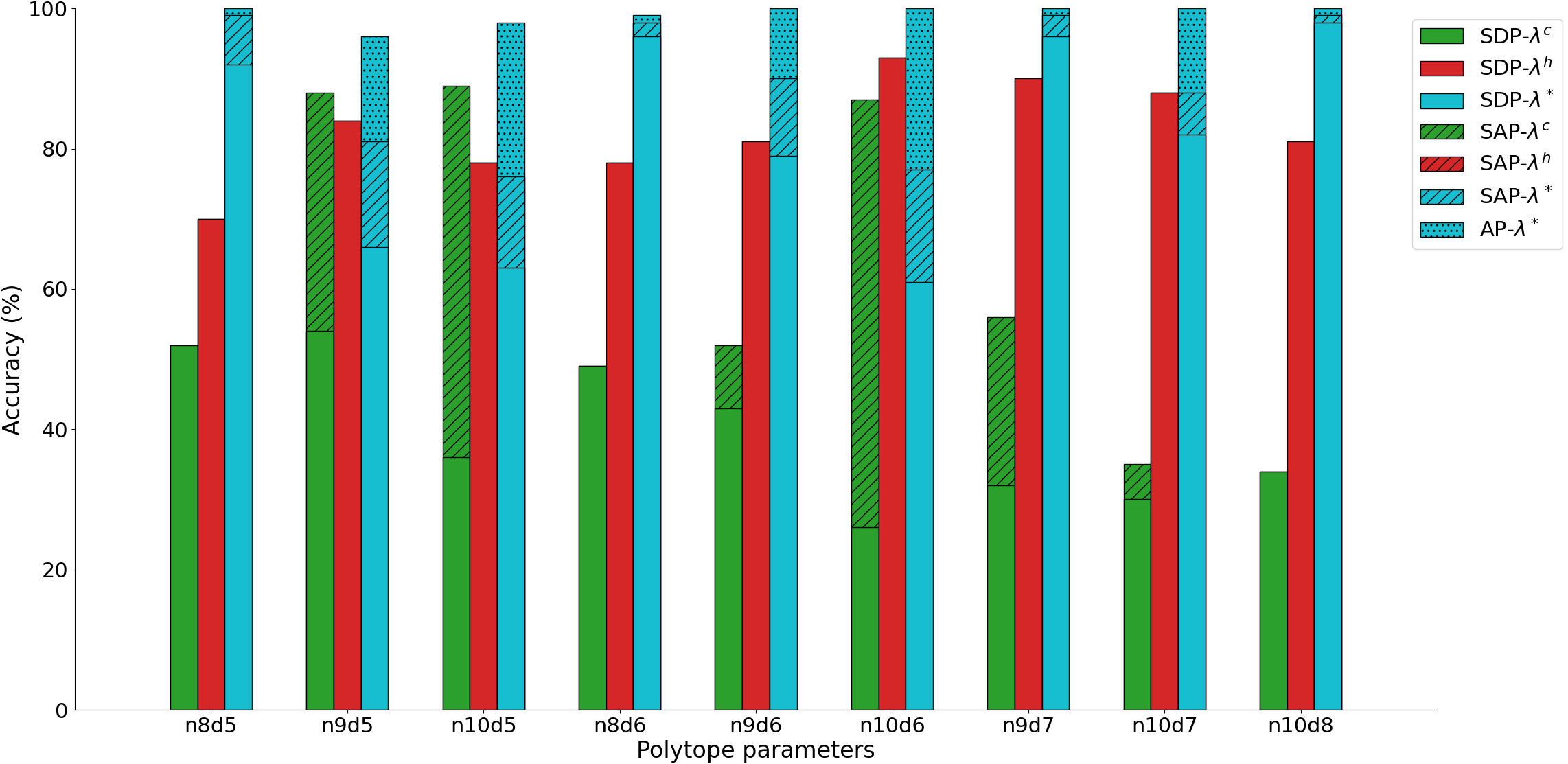}
         \caption{Accuracy on random polytopes}
         \label{fig:randacc}
    \end{figure}\FloatBarrier

    In Table \ref{tab:randtime}, the numbers without brackets are the average runtime per polytope in seconds, while the numbers inside the brackets are the maximum runtime required to solve all tests with the corresponding parameter setting.  In general, the SDP algorithm is the most efficient algorithm.  Besides, the maximum runtime of the SDP algorithm is similar to the average runtime in most cases, which implies that the average runtime of the SDP algorithm shown in the table is a good estimate of its general performance.  Therefore, Table \ref{tab:randtime} demonstrates the robustness and efficiency of the SDP algorithm.
    
    For the SAP and AP algorithms, since we use the solution of the SDP algorithm as the starting point, the runtime required to solve one test ranges from the runtime of the SDP algorithm (occurs when the SDP solution already satisfies the stopping condition) to infinity (occurs when the algorithms never converge).  Therefore, we declare that the SAP and AP algorithms failed if they required more than 10 hours to solve one test.  In our experiments, we only observed two failed tests, both happened while using the AP-$\lambda^*$ algorithm: one occurred when $n=9$ and $d=5$, and the other occurred when $n=10$ and $d=5$.  For these two failed tests, we do not count their runtime data while computing the average and maximum runtime, instead, we add a ``$\ge$" symbol before the computed values in the table.  Table \ref{tab:randtime} presents the average and maximum runtime of all successful tests under each parameter setting.  Clearly, the SDP and SAP algorithms are much more efficient than the AP algorithm.

    In summary, Figure \ref{fig:randacc} and Table \ref{tab:randtime} imply that the SDP algorithm is the most efficient and robust.  Although the AP algorithm is the most accurate, it takes much more runtime than the other two.  Besides, the heuristic of tuning $\lambda_{ij}$ (Algorithm \ref{alg:tunelmdheuristic}) is very effective and makes the accuracy of SDP-$\lambda^h$ comparable with SDP-$\lambda^*$, while not requiring a known inscription to implement. 
    
    \begin{table}[!htb]
    \caption{Average runtime per polytope (seconds)}
    \label{tab:randtime}
    \centering
    \begin{tabular}{cccccccccccc}
    \hline
    Alg$\backslash$Para & $n8d5$ & $n9d5$ & $n10d5$ & $n8d6$ & $n9d6$ & $n10d6$ & $n9d7$ & $n10d7$ & $n10d8$\\ \hline
    \multirow{2}{*}{SDP-$\lambda^c$\vspace{0.1cm}} & \multirow{2}{*}{0.40\vspace{0.1cm}} & \multirow{2}{*}{0.53\vspace{0.1cm}} & \multirow{2}{*}{0.73\vspace{0.1cm}} & \multirow{2}{*}{0.36\vspace{0.1cm}} & \multirow{2}{*}{0.49\vspace{0.1cm}} & \multirow{2}{*}{0.90\vspace{0.1cm}} & \multirow{2}{*}{0.39\vspace{0.1cm}} & \multirow{2}{*}{0.66\vspace{0.1cm}} & \multirow{2}{*}{0.47\vspace{0.1cm}}\\
    {\ssmall (max)} & {\ssmall (0.50)} & {\ssmall (0.75)} & {\ssmall (1.01)} & {\ssmall (0.46)} & {\ssmall (0.90)} & {\ssmall (1.18)} & {\ssmall (0.49)} & {\ssmall (1.26)} & {\ssmall (0.85)}\\
    \multirow{2}{*}{SAP-$\lambda^c$} & \multirow{2}{*}{4.98\vspace{0.1cm}} & \multirow{2}{*}{3.43\vspace{0.1cm}} & \multirow{2}{*}{15.13\vspace{0.1cm}} & \multirow{2}{*}{19.62\vspace{0.1cm}} & \multirow{2}{*}{21.01\vspace{0.1cm}} & \multirow{2}{*}{24.61\vspace{0.1cm}} & \multirow{2}{*}{37.76\vspace{0.1cm}} & \multirow{2}{*}{86.86\vspace{0.1cm}} & \multirow{2}{*}{37.52\vspace{0.1cm}}\\
    {\ssmall (max)} & {\ssmall (25.80)} & {\ssmall (63.73)} & {\ssmall (158.37)} & {\ssmall (47.93)} & {\ssmall (74.00)} & {\ssmall (189.07)} & {\ssmall (47.09)} & {\ssmall (170.55)} & {\ssmall (69.12)}\\
    \multirow{2}{*}{SDP-$\lambda^h$} & \multirow{2}{*}{0.59\vspace{0.1cm}} & \multirow{2}{*}{1.58\vspace{0.1cm}} & \multirow{2}{*}{5.10\vspace{0.1cm}} & \multirow{2}{*}{3.29\vspace{0.1cm}} & \multirow{2}{*}{1.19\vspace{0.1cm}} & \multirow{2}{*}{2.89\vspace{0.1cm}} & \multirow{2}{*}{4.41\vspace{0.1cm}} & \multirow{2}{*}{2.32\vspace{0.1cm}} & \multirow{2}{*}{5.17\vspace{0.1cm}}\\
    {\ssmall (max)} & {\ssmall (3.93)} & {\ssmall (12.32)} & {\ssmall (39.07)} & {\ssmall (13.76)} & {\ssmall (3.95)} & {\ssmall (19.65)} & {\ssmall (8.21)} & {\ssmall (6.21)} & {\ssmall (9.78)}\\
    \multirow{2}{*}{SAP-$\lambda^h$} & \multirow{2}{*}{0.60\vspace{0.1cm}} & \multirow{2}{*}{5.45\vspace{0.1cm}} & \multirow{2}{*}{35.75\vspace{0.1cm}} & \multirow{2}{*}{5.50\vspace{0.1cm}} & \multirow{2}{*}{1.24\vspace{0.1cm}} & \multirow{2}{*}{12.34\vspace{0.1cm}} & \multirow{2}{*}{6.06\vspace{0.1cm}} & \multirow{2}{*}{2.46\vspace{0.1cm}} & \multirow{2}{*}{12.45\vspace{0.1cm}}\\
    {\ssmall (max)} & {\ssmall (3.93)} & {\ssmall (78.71)} & {\ssmall (201.05)} & {\ssmall (39.27)} & {\ssmall (3.95)} & {\ssmall (202.04)} & {\ssmall (44.79)} & {\ssmall (14.01)} & {\ssmall (69.51)}\\
    \multirow{2}{*}{SDP-$\lambda^*$} & \multirow{2}{*}{0.81\vspace{0.1cm}} & \multirow{2}{*}{1.08\vspace{0.1cm}} & \multirow{2}{*}{1.42\vspace{0.1cm}} & \multirow{2}{*}{0.67\vspace{0.1cm}} & \multirow{2}{*}{0.91\vspace{0.1cm}} & \multirow{2}{*}{1.43\vspace{0.1cm}} & \multirow{2}{*}{1.18\vspace{0.1cm}} & \multirow{2}{*}{1.68\vspace{0.1cm}} & \multirow{2}{*}{1.16\vspace{0.1cm}}\\
    {\ssmall (max)} & {\ssmall (0.97)} & {\ssmall (1.62)} & {\ssmall (1.69)} & {\ssmall (0.87)} & {\ssmall (1.05)} & {\ssmall (1.59)} & {\ssmall (1.41)} & {\ssmall (2.23)} & {\ssmall (1.70)}\\
    \multirow{2}{*}{SAP-$\lambda^*$} & \multirow{2}{*}{1.40\vspace{0.1cm}} & \multirow{2}{*}{11.83\vspace{0.1cm}} & \multirow{2}{*}{34.42\vspace{0.1cm}} & \multirow{2}{*}{1.14\vspace{0.1cm}} & \multirow{2}{*}{6.62\vspace{0.1cm}} & \multirow{2}{*}{42.09\vspace{0.1cm}} & \multirow{2}{*}{1.43\vspace{0.1cm}} & \multirow{2}{*}{14.47\vspace{0.1cm}} & \multirow{2}{*}{1.79\vspace{0.1cm}}\\
    {\ssmall (max)} & {\ssmall (32.33)} & {\ssmall (66.04)} & {\ssmall (162.21)} & {\ssmall (29.24)} & {\ssmall (62.90)} & {\ssmall (197.11)} & {\ssmall (13.30)} & {\ssmall (165.86)} & {\ssmall (61.28)}\\
    \multirow{2}{*}{AP-$\lambda^*$} & \multirow{2}{*}{4.49\vspace{0.1cm}} & \multirow{2}{*}{$\ge$117.21\vspace{0.1cm}} & \multirow{2}{*}{$\ge$567.20\vspace{0.1cm}} & \multirow{2}{*}{29.15\vspace{0.1cm}} & \multirow{2}{*}{46.25\vspace{0.1cm}} & \multirow{2}{*}{280.89\vspace{0.1cm}} & \multirow{2}{*}{3.98\vspace{0.1cm}} & \multirow{2}{*}{83.78\vspace{0.1cm}} & \multirow{2}{*}{4.94\vspace{0.1cm}}\\
    {\ssmall (max)}  & {\ssmall (111.83)} & {\ssmall ($\ge$2285.51)} & {\ssmall ($\ge$7975.37)} & {\ssmall (2632.58)} & {\ssmall (1238.94)} & {\ssmall (2554.53)} & {\ssmall (119.55)} & {\ssmall (1258.57)} & {\ssmall (348.78)}\\
    \hline
    \end{tabular}
    \end{table}\FloatBarrier

\subsubsection{Results on the list of inscribable polytopes}
    Figure \ref{fig:listacc} and Table \ref{tab:listtime} present the accuracy and runtime results on the second test set.  The complete data corresponding to Figure \ref{fig:listacc} can be found in Appendix \ref{app:morenumres}.  Note that for $n=10$, we only run 100 tests from the list provided by Firsching \cite{firsching2017realizability}.  We also note that we do not run SDP-$\lambda^c$ and SAP-$\lambda^c$ on the second test set since they have clearly worse performance than other algorithms according to the results on the first test set (Figure \ref{fig:randacc}).

    Similar to the experiments on the first test set, we do not count the runtime data of the tests that require more than 10 hours to solve.  This time, we observed 4 failed tests: all occurred when $n=9$ while using the AP-$\lambda^*$ algorithm.  Figure \ref{fig:listacc} and Table \ref{tab:listtime} have similar patterns as Figure \ref{fig:randacc} and Table \ref{tab:randtime}.  The SAP algorithm improves the accuracy of SDP-$\lambda^*$ by an obvious amount while having no improvements on SDP-$\lambda^h$.  Although the AP algorithm is the most accurate, it is much slower than the other algorithms.  Besides, Table \ref{tab:listtime} again demonstrates the efficiency and robustness of the SDP algorithm.

    It is worth noticing that the accuracy of the SDP-$\lambda^h$ decreases as $n$ increases in this test set.  This implies that the heuristic we used to tune $\lambda_{ij}$ presented in Algorithm~\ref{alg:tunelmdheuristic} is not a universally good strategy.  Future research may explore better parameter settings for Algorithm~\ref{alg:tunelmdheuristic} or other more adaptive ways of tuning $\lambda_{ij}$.
    \begin{figure}[!htb]
         \centering
         \includegraphics[width=1\textwidth]{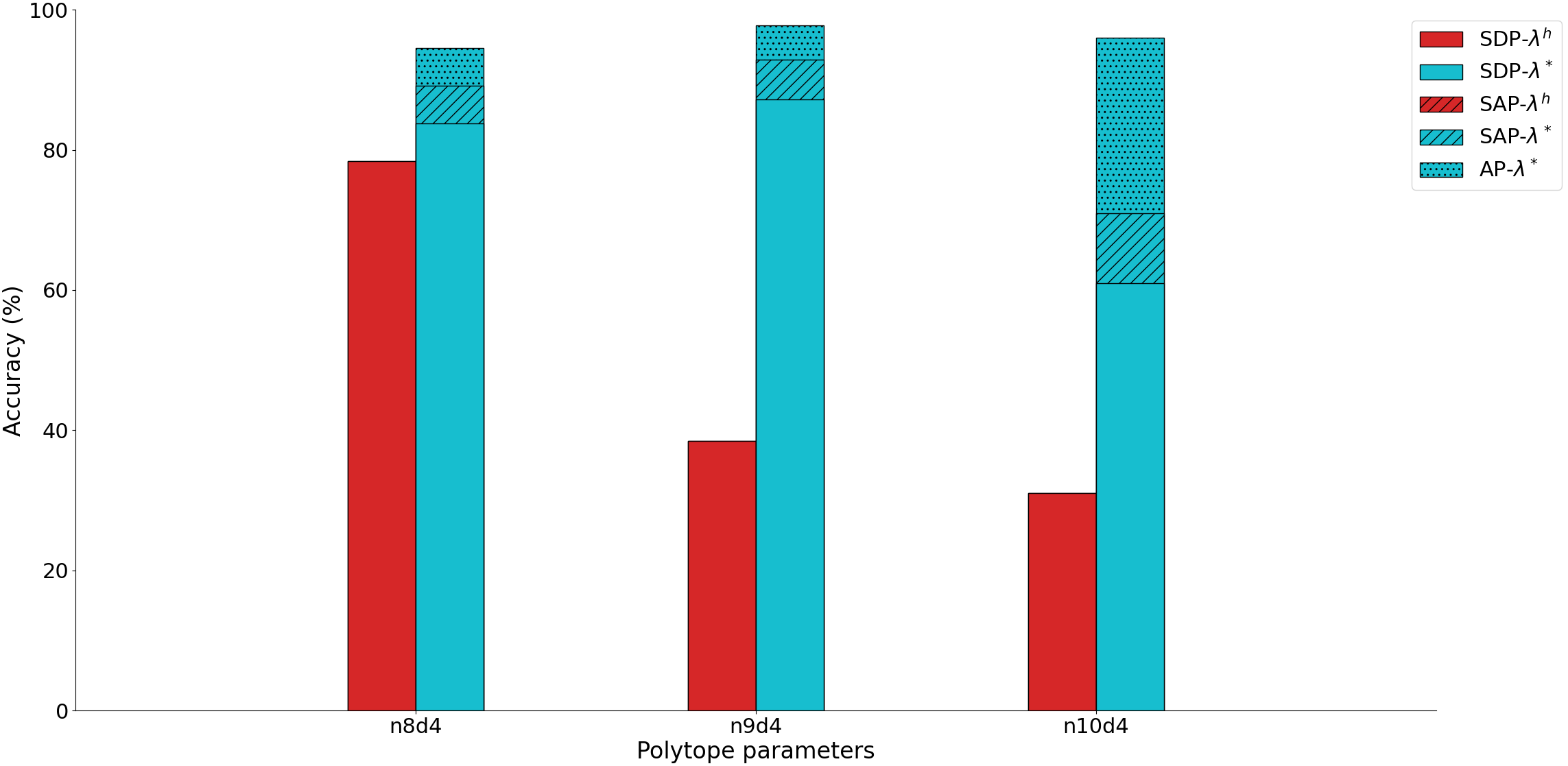}
         \caption{Accuracy on random polytopes}
         \label{fig:listacc}
    \end{figure}\FloatBarrier

    \begin{table}[!htb]
    \caption{Average runtime per polytope (s)}
    \label{tab:listtime}
    \centering
    \begin{tabular}{ccccccc}
    \hline
    Alg$\backslash n$ & 5 & 6 & 7 & 8 & 9 & 10 \\ \hline
    \multirow{2}{*}{SDP-$\lambda^h$} & \multirow{2}{*}{0.27\vspace{0.1cm}} & \multirow{2}{*}{0.59\vspace{0.1cm}} & \multirow{2}{*}{0.40\vspace{0.1cm}} & \multirow{2}{*}{1.05\vspace{0.1cm}} & \multirow{2}{*}{5.92\vspace{0.1cm}} & \multirow{2}{*}{6.63\vspace{0.1cm}}\\
    {\ssmall (max)} & {\ssmall (0.27)} & {\ssmall (0.92)} & {\ssmall (0.81)} & {\ssmall (9.56)} & {\ssmall (16.40)} & {\ssmall (14.43)}\\
    \multirow{2}{*}{SAP-$\lambda^h$}  & \multirow{2}{*}{0.34\vspace{0.1cm}} & \multirow{2}{*}{0.60\vspace{0.1cm}} & \multirow{2}{*}{0.42\vspace{0.1cm}} & \multirow{2}{*}{3.54\vspace{0.1cm}} & \multirow{2}{*}{37.09\vspace{0.1cm}} & \multirow{2}{*}{33.30\vspace{0.1cm}} \\
    {\ssmall (max)} & {\ssmall (0.34)} & {\ssmall (0.93)} & {\ssmall (0.85)} & {\ssmall (40.18)} & {\ssmall (256.38)} & {\ssmall (78.73)}\\
    \multirow{2}{*}{SDP-$\lambda^*$} & \multirow{2}{*}{0.41\vspace{0.1cm}} & \multirow{2}{*}{0.49\vspace{0.1cm}} & \multirow{2}{*}{0.61\vspace{0.1cm}} & \multirow{2}{*}{0.91\vspace{0.1cm}} & \multirow{2}{*}{1.29\vspace{0.1cm}} & \multirow{2}{*}{1.46\vspace{0.1cm}}\\
    {\ssmall (max)} & {\ssmall (0.41)} & {\ssmall (0.50)} & {\ssmall (0.67)} & {\ssmall (1.48)} & {\ssmall (1.86)} & {\ssmall (2.02)}\\
    \multirow{2}{*}{SAP-$\lambda^*$}  & \multirow{2}{*}{0.42\vspace{0.1cm}} & \multirow{2}{*}{0.50\vspace{0.1cm}} & \multirow{2}{*}{5.11\vspace{0.1cm}} & \multirow{2}{*}{4.36\vspace{0.1cm}} & \multirow{2}{*}{4.95\vspace{0.1cm}} & \multirow{2}{*}{18.92\vspace{0.1cm}} \\
    {\ssmall (max)} & {\ssmall (0.42)} & {\ssmall (0.50)} & {\ssmall (23.04)} & {\ssmall (33.63)} & {\ssmall (82.63)} & {\ssmall (76.91)}\\
    \multirow{2}{*}{AP-$\lambda^*$}  & \multirow{2}{*}{0.42\vspace{0.1cm}} & \multirow{2}{*}{0.50\vspace{0.1cm}} & \multirow{2}{*}{15.08\vspace{0.1cm}} & \multirow{2}{*}{86.73\vspace{0.1cm}} & \multirow{2}{*}{$\ge$105.02\vspace{0.1cm}} & \multirow{2}{*}{317.72\vspace{0.1cm}}\\ 
    {\ssmall (max)} & {\ssmall (0.42)} & {\ssmall (0.51)} & {\ssmall (72.89)} & {\ssmall (1582.87)} & {\ssmall ($\ge$8565.24)} & {\ssmall (9769.08)}\\
    \hline
    \end{tabular}
    \end{table}\FloatBarrier



\section{Conclusion}\label{sec:conclusion}
    This paper provided a new necessary and sufficient condition for the inscribability of a polytope.  Using slack matrices, we characterized the problem of determining the inscribability of a polytope as an equivalent minimum rank optimization problem.  We proposed an SDP approximation for the minimum rank optimization problem and proved that it is tight for certain classes of polytopes.  For general polytopes, we provided three algorithms with numerical comparisons for the inscribability problem.  Although the optimization problems and algorithms we proposed depend on the number of vertices and facets, they are independent of the dimension of polytopes, which is an advantage over the method proposed by Firsching \cite{firsching2017realizability}.  Numerical results demonstrated our SDP approximation's efficiency, accuracy, and robustness for determining the inscribability of simplicial polytopes of dimensions $4\le d\le 8$ and vertices $n$ up to 10.  Therefore, the SDP approximation proposed in this paper can be used as a theoretical and practical tool for determining inscribability in high dimensions.  In particular, we recommend the following procedure for determining inscribability:
    \begin{itemize}
        \item {\bf Step 1:} Run SDP-$\lambda^h$ and check if the solution gives an inscription.  If yes, then the polytope is inscribable.
        \item {\bf Step 2:} Run SAP with the starting point set to be the solution of SDP-$\lambda^h$ and check if the solution gives an inscription.  If yes, then the polytope is inscribable.
        \item {\bf Step 3:} Run AP with the starting point set to be the solution of SDP-$\lambda^h$ and check if the solution gives an inscription.  If yes, then the polytope is inscribable.
    \end{itemize}

    However, we should note that all algorithms in this paper can not determine non-inscribability of polytopes.  In particular, if the algorithm gives a solution with rank $d+1$ to problem \eqref{pro:ori}, then we can conclude that the polytope is inscribable; if the algorithm cannot find a rank $d+1$ solution, then we cannot conclude that the polytope is non-inscribable.  An extension of this paper could be to derive general non-inscribability certificates for combinatorial polytopes.  The techniques proposed in \cite{gouveia2023general} to derive non-realizability could be used to help.

    Another future direction would be exploring more effective and efficient methods for solving the minimum rank optimization problem \eqref{pro:ori}.  As shown in our numerical experiments (Section \ref{sec:numexp}), the alternating projecting method has great potential in terms of accuracy but is rather inefficient.  Future work may focus on improving the efficiency of the alternating projecting method.

    It is also worth developing better heuristics for tuning $\lambda_{ij}$.  One natural extension is to design adaptive strategies for Algorithm \ref{alg:tunelmdheuristic}, instead of increasing $\lambda_{ij}$ by a fixed rate $\lambda^{\mathrm{inc}}$.  The weights tuning problem can also be viewed as a black-box optimization (BBO) or derivative-free optimization (DFO) problem, so many classic BBO or DFO methods can be applied.

\bibliographystyle{siam}
\bibliography{references}

\begin{thebibliography}{10}

\bibitem{bauschke2013restrictedb}
{\sc H.~H. Bauschke, D.~R. Luke, H.~M. Phan, and X.~Wang}, {\em Restricted normal cones and the method of alternating projections: applications}, Set-Valued and Variational Analysis, 21 (2013), pp.~475--501.

\bibitem{bauschke2013restricteda}
\leavevmode\vrule height 2pt depth -1.6pt width 23pt, {\em Restricted normal cones and the method of alternating projections: theory}, Set-Valued and Variational Analysis, 21 (2013), pp.~431--473.

\bibitem{bauschke2023introduction}
{\sc H.~H. Bauschke and W.~M. Moursi}, {\em An introduction to convexity, optimization, and algorithms}, Society for Industrial and Applied Mathematics, Philadelphia, PA, 2023.

\bibitem{boyd2004convex}
{\sc S.~Boyd and L.~Vandenberghe}, {\em Convex optimization}, Cambridge university press, 2004.

\bibitem{braun2015approximation}
{\sc G.~Braun, S.~Fiorini, S.~Pokutta, and D.~Steurer}, {\em Approximation limits of linear programs (beyond hierarchies)}, Mathematics of Operations Research, 40 (2015), pp.~756--772.

\bibitem{eckart1936approximation}
{\sc C.~Eckart and G.~Young}, {\em The approximation of one matrix by another of lower rank}, Psychometrika, 1 (1936), pp.~211--218.

\bibitem{fazel2002matrix}
{\sc M.~Fazel}, {\em Matrix rank minimization with applications}, PhD thesis, Stanford University, 2002.

\bibitem{fazel2004rank}
{\sc M.~Fazel, H.~Hindi, and S.~Boyd}, {\em Rank minimization and applications in system theory}, in Proceedings of the 2004 American control conference, vol.~4, IEEE, 2004, pp.~3273--3278.

\bibitem{fiorini2012linear}
{\sc S.~Fiorini, S.~Massar, S.~Pokutta, H.~R. Tiwary, and R.~De~Wolf}, {\em Linear vs. semidefinite extended formulations: exponential separation and strong lower bounds}, in Proceedings of the forty-fourth annual ACM symposium on Theory of computing, 2012, pp.~95--106.

\bibitem{firsching2017realizability}
{\sc M.~Firsching}, {\em Realizability and inscribability for simplicial polytopes via nonlinear optimization}, Mathematical Programming, 166 (2017), pp.~273--295.

\bibitem{Gawrilow2000}
{\sc E.~Gawrilow and M.~Joswig}, {\em polymake: a framework for analyzing convex polytopes}, Birkh{\"a}user, Basel, 2000, pp.~43--73.

\bibitem{gouveia2013nonnegative}
{\sc J.~Gouveia, R.~Grappe, V.~Kaibel, K.~Pashkovich, R.~Z. Robinson, and R.~R. Thomas}, {\em Which nonnegative matrices are slack matrices?}, Linear Algebra and its Applications, 439 (2013), pp.~2921--2933.

\bibitem{gouveia2019slack}
{\sc J.~Gouveia, A.~Macchia, R.~R. Thomas, and A.~Wiebe}, {\em The slack realization space of a polytope}, SIAM Journal on Discrete Mathematics, 33 (2019), pp.~1637--1653.

\bibitem{gouveia2023combining}
{\sc J.~Gouveia, A.~Macchia, and A.~Wiebe}, {\em Combining realization space models of polytopes}, Discrete \& Computational Geometry, 69 (2023), pp.~505--542.

\bibitem{gouveia2023general}
\leavevmode\vrule height 2pt depth -1.6pt width 23pt, {\em General non-realizability certificates for spheres with linear programming}, Journal of Symbolic Computation, 114 (2023), pp.~172--192.

\bibitem{gouveia2017four}
{\sc J.~Gouveia, K.~Pashkovich, R.~Z. Robinson, and R.~R. Thomas}, {\em Four-dimensional polytopes of minimum positive semidefinite rank}, Journal of Combinatorial Theory, Series A, 145 (2017), pp.~184--226.

\bibitem{grant2008graph}
{\sc M.~Grant and S.~Boyd}, {\em Graph implementations for nonsmooth convex programs}, in Recent Advances in Learning and Control, Lecture Notes in Control and Information Sciences, Springer-Verlag Limited, 2008, pp.~95--110.
\newblock \url{http://stanford.edu/~boyd/graph_dcp.html}.

\bibitem{grant2014cvx}
\leavevmode\vrule height 2pt depth -1.6pt width 23pt, {\em {CVX}: Matlab software for disciplined convex programming, version 2.1}.
\newblock \url{https://cvxr.com/cvx}, 2014.

\bibitem{grunbaum2003convex}
{\sc B.~Gr{\"u}nbaum}, {\em Convex polytopes}, Graduate Texts in Mathematics,  (2003).

\bibitem{lemon2016low}
{\sc A.~Lemon, A.~M. So, and Y.~Ye}, {\em Low-rank semidefinite programming: Theory and applications}, Foundations and Trends{\textregistered} in Optimization, 2 (2016), pp.~1--156.

\bibitem{lewis2009local}
{\sc A.~S. Lewis, D.~R. Luke, and J.~Malick}, {\em Local linear convergence for alternating and averaged nonconvex projections}, Foundations of Computational Mathematics, 9 (2009), pp.~485--513.

\bibitem{noll2016local}
{\sc D.~Noll and A.~Rondepierre}, {\em On local convergence of the method of alternating projections}, Foundations of Computational Mathematics, 16 (2016), pp.~425--455.

\bibitem{padrol2016six}
{\sc A.~Padrol and G.~M. Ziegler}, {\em Six topics on inscribable polytopes}, Advances in discrete differential geometry,  (2016), pp.~407--419.

\bibitem{rivin1996characterization}
{\sc I.~Rivin}, {\em A characterization of ideal polyhedra in hyperbolic 3-space}, Annals of mathematics, 143 (1996), pp.~51--70.

\bibitem{rothvoss2017matching}
{\sc T.~Rothvo{\ss}}, {\em The matching polytope has exponential extension complexity}, Journal of the ACM, 64 (2017), pp.~1--19.

\bibitem{steiner1832systematische}
{\sc J.~Steiner}, {\em Systematische Entwicklung der Abh{\"a}ngigkeit geometrischer Gestalten von einander}, Fincke, 1832.

\bibitem{steinitz1928isoperimetrische}
{\sc E.~Steinitz}, {\em {\"U}ber isoperimetrische probleme bei konvexen polyedern}, Journal f{\"u}r die reine und angewandte Mathematik, 159 (1928), pp.~133--143.

\bibitem{yannakakis1988expressing}
{\sc M.~Yannakakis}, {\em Expressing combinatorial optimization problems by linear programs}, in Proceedings of the twentieth annual ACM symposium on Theory of computing, 1988, pp.~223--228.

\end{thebibliography}

\appendix

\section{Eigenvalue computations}
\subsection{Eigenvalues of $MM^\top$ in Subsubsection \ref{subsubsec:ngons}}\label{app:ngons_eigv}
Notice that 
\begin{equation*}
    MM^\top = \frac{(n-2)^2}{n^2\cos^4\frac{\pi}{n}} \begin{bmatrix}
        a & b & c & \cdots & b\\
        b & a & b & \cdots & c\\    
        c & b & a & \cdots & c\\
        \vdots & \vdots & \vdots & \ddots & \vdots\\
        b & c & c & \cdots & a
\end{bmatrix},
\end{equation*}
where
\begin{equation*}
    a=\frac{2n}{n-2},~~b=\frac{n(n-4)}{(n-2)^2},~~c=-\frac{4n}{(n-2)^2}.
\end{equation*}

We now consider two cases: $n=3$ and $n\ge 4$.  If $n=3$, then the eigenvalues of $MM^\top$ are
\begin{align*}
    \lambda_j &= \frac{1}{9\cos^4\frac{\pi}{3}}\left(a+be^{\frac{2\pi ji}{3}}+be^{\frac{4\pi ji}{3}}\right)\\
    &= 
    \begin{cases}
        \frac{1}{9\cos^4\frac{\pi}{3}}\left(a+2b\right)=0,&~\text{if $j=0$},\\
        \frac{1}{9\cos^4\frac{\pi}{3}}\left(a+2b\cos\frac{2\pi}{3}\right)=\frac{1}{\cos^4\frac{\pi}{3}},&~\text{if $j=1,2$}.
    \end{cases}
\end{align*}
Thus, the largest eigenvalue of $MM^\top$ is 
\begin{equation*}
    \lambda_{\max}(MM^\top) = \lambda_1=\lambda_2=\frac{1}{\cos^4\frac{\pi}{3}}=\frac{4}{\cos^2\frac{\pi}{n}}.
\end{equation*}

If $n\ge 4$, then the eigenvalues of $MM^\top$ are
\begin{align*}
    \lambda_j &= \frac{(n-2)^2}{n^2\cos^4\frac{\pi}{n}}\left(a+b\omega^j+c\sum\limits_{k=2}^{n-2}\omega^{kj}+b\omega^{(n-1)j}\right)\\
    &= 
    \begin{cases}
        \frac{(n-2)^2}{n^2\cos^4\frac{\pi}{n}}\left(a+2b+(n-3)c\right)=0,&~\text{if $j=0$},\\
        \frac{(n-2)^2}{n^2\cos^4\frac{\pi}{n}}\left(a+2b\cos\frac{2\pi j}{n}-c\left(1+2\cos\frac{2\pi j}{n}\right)\right),&~\text{if $j=1,\ldots,n-1$},
    \end{cases}
\end{align*}
where $\omega=e^{2\pi i\slash n}$.
Since $a>0$, $b\ge0$, and $c<0$, the largest eigenvalue of $MM^\top$ is
\begin{equation*}
    \lambda_{\max}(MM^\top) = \lambda_1=\lambda_{n-1}=\frac{(n-2)^2}{n^2\cos^4\frac{\pi}{n}}\left(a+2b\cos\frac{2\pi}{n}-c\left(1+2\cos\frac{2\pi}{n}\right)\right)=\frac{4}{\cos^2\frac{\pi}{n}}.
\end{equation*}

\subsection{Eigenvalues of $MM^\top$ in Subsubsection \ref{subsubsec:simplices}}\label{app:simplices_eigv}
Notice that
\begin{align*}
    MM^\top &= \left(\frac{2d}{d+1}\right)^2 \begin{bmatrix}
        d^2+d & -1-d & \cdots & -1-d\\
        -1-d & d^2+d & \cdots & -1-d\\
        \vdots & \vdots & \ddots & \vdots\\
        -1-d & -1-d & \cdots & d^2+d
    \end{bmatrix}\\
    &= \left(\frac{2d}{d+1}\right)^2\left(\left(d+1\right)^2I_n-\left(d+1\right)\mymathbb{1}_{n\times n}\right)\\
    &= 4d^2I_n-\frac{4d^2}{d+1}\mymathbb{1}_{n\times n}.
\end{align*}
Notice that the eigenvalues of an all-one matrix with dimension $n$ are $\{n,0\}$.  Therefore, the eigenvalues of $MM^\top$ are
\begin{equation*}
    \lambda\in\left\{4d^2, 0\right\}
\end{equation*}
and so
\begin{equation*}
    \lambda_{\max}(MM^\top) = 4d^2.
\end{equation*}

\subsection{Eigenvalues of $MM^\top$ in Subsubsections \ref{subsubsec:cubes} and \ref{subsubsec:crosspolytopes}}\label{app:cubescrosspolytopes_eigv}

Notice that for any matrices $A$, $AA^\top\succeq 0$ and $A^\top A\succeq 0$ have the same nonzero eigenvalues.  Thus, for Subsubsections~\ref{subsubsec:cubes} and \ref{subsubsec:crosspolytopes}, we only need to compute the largest eigenvalue of the matrix
\begin{equation*}
    N = \left[m_1\cdots m_d~-m_1\cdots -m_d\right]^\top \left[m_1\cdots m_d~-m_1\cdots -m_d\right] = 2^d \left[\widetilde{m}_1\cdots \widetilde{m}_d~-\widetilde{m}_1\cdots -\widetilde{m}_d\right],
\end{equation*}
where
\begin{equation*}
    m_i=\left[\underbrace{\underbrace{1\cdots 1}_{\text{length}=2^{d-i}}\underbrace{-1\cdots -1}_{\text{length}=2^{d-i}}}_{\text{repeat}~2^{i-1}~\text{times}}\cdots\right]^\top, i=1,\ldots,d
\end{equation*}
and
\begin{equation*}
    \left(\widetilde{m}_i\right)_j = \begin{cases}
        1,~\text{if $j=i$},\\
        -1,~\text{if $j=d+i$},\\
        0,~\text{if otherwise},\\
    \end{cases} 
    i=1,\ldots,d,~~j=1,\ldots,2d.
\end{equation*}

Suppose $x=[x_1\cdots x_{2d}]^\top$ with $\|x\|=1$ is an eigenvector of $N$ corresponding to eigenvalue $\lambda$.  Then, we have
\begin{equation*}
    Nx = \lambda x = 2^d\begin{bmatrix}
        x_1-x_{d+1}\\
        \vdots\\
        x_d-x_{2d}\\
        -x_1+x_{d+1}\\
        \vdots\\
        -x_d+x_{2d}\\
    \end{bmatrix}
\end{equation*}
which gives
\begin{equation*}
    \lambda = 2^d\sqrt{2\sum\limits_{i=1}^d\left(x_i-x_{d+i}\right)^2}.
\end{equation*}
If $x$ corresponds to the largest eigenvalue $\lambda_{\max}(N)$, then $x$ satisfy
\begin{equation*}
    x_{d+i} = -x_i,i=1,\ldots,d.
\end{equation*}
From $\|x\|=1$, we get
\begin{equation*}
    x = \left[\frac{1}{\sqrt{2d}}\cdots\frac{1}{\sqrt{2d}}~-\frac{1}{\sqrt{2d}}\cdots -\frac{1}{\sqrt{2d}}\right]^\top
\end{equation*}
and so
\begin{equation*}
    \lambda_{\max}(N) = 2^d\sqrt{2d\frac{2}{d}} = 2^{d+1}.
\end{equation*}

\section{More numerical results}\label{app:morenumres}
The following tables give the exact proportions of our inscribable test sets for which inscriptions were found.

\subsection{Results on random polytopes}~


    \begin{table}[!htb]
    \caption{Accuracy in higher dimensions (rank tolerance 1e-6)}
    \centering
    \begin{tabular}{cccccccccccc}
    \hline
    Alg$\backslash$Para & $n8d5$ & $n9d5$ & $n10d5$ & $n8d6$ & $n9d6$ & $n10d6$\\ \hline
    SDP-$\lambda^c$ & 52/100 & 54/100 & 36/100 & 49/100 & 43/100 & 26/100\\
    SAP-$\lambda^c$ & 52/100 & 88/100 & 89/100 & 49/100 & 52/100 & 87/100\\
    SDP-$\lambda^h$ & 70/100 & 84/100 & 78/100 & 78/100 & 81/100 & 93/100\\
    SAP-$\lambda^h$ & 70/100 & 84/100 & 78/100 & 78/100 & 81/100 & 93/100\\
    SDP-$\lambda^*$ & 92/100 & 66/100 & 63/100 & 96/100 & 79/100 & 61/100\\
    SAP-$\lambda^*$ & 99/100 & 81/100 & 76/100 & 98/100 & 90/100 & 77/100\\
    AP-$\lambda^*$ & 100/100 & 96/100 & 98/100 & 99/100 & 100/100 & 100/100\\
    \hline
    \end{tabular}
    \end{table}\FloatBarrier

    \begin{table}[!htb]
    \caption{Accuracy in higher dimensions (rank tolerance 1e-6) continued}
    \centering
    \begin{tabular}{cccccccccccc}
    \hline
    Alg$\backslash$Para & $n9d7$ & $n10d7$ & $n10d8$\\ \hline
    SDP-$\lambda^c$ & 32/100 & 30/100 & 34/100\\
    SAP-$\lambda^c$ & 56/100 & 35/100 & 34/100\\
    SDP-$\lambda^h$ & 90/100 & 88/100 & 81/100\\
    SAP-$\lambda^h$ & 90/100 & 88/100 & 81/100\\
    SDP-$\lambda^*$ & 96/100 & 83/100 & 98/100\\
    SAP-$\lambda^*$ & 99/100 & 88/100 & 99/100\\
    AP-$\lambda^*$ & 100/100 & 100/100 & 100/100\\
    \hline
    \end{tabular}
    \end{table}\FloatBarrier

\subsection{Results on the list of inscribable polytopes}~
\begin{table}[!htb]
    \caption{Accuracy (rank tolerance 1e-6)}
    \centering
    \begin{tabular}{ccccccc}
    \hline
    Alg$\backslash n$ & 5 & 6 & 7 & 8 & 9 & 10 \\ \hline
    SDP-$\lambda^h$ & 1/1 & 2/2 & 2/5 & 29/37 & 438/1140 & 31/100 \\
    SAP-$\lambda^h$ & 1/1 & 2/2 & 2/5 & 29/37 & 438/1140 & 31/100 \\
    SDP-$\lambda^*$ & 1/1 & 2/2 & 4/5 & 31/37 & 994/1140 & 61/100\\ 
    SAP-$\lambda^*$ & 1/1 & 2/2 & 4/5 & 33/37 & 1059/1140 & 71/100\\ 
    AP-$\lambda^*$ & 1/1 & 2/2 & 5/5 & 35/37 & 1115/1140 & 96/100\\ 
    \hline
    \end{tabular}
\end{table}\FloatBarrier
\end{document}